\newcommand{\veps}{\varepsilon}
\newcommand{\R}{\mathbb{R}}
\newcommand{\C}{\mathbb{C}}
\newcommand{\N}{\mathbb{N}}
\newcommand{\Z}{\mathbb{Z}}
\newcommand{\vp}{\varphi}
\newtheorem{theorem}{Theorem}[section]
\theoremstyle{plain}
\newtheorem{corollary}[theorem]{Corollary}
\newtheorem{exa}[theorem]{Example}
\newenvironment{example}{\begin{exa}\rm}{\end{exa}}
\newtheorem{lemma}[theorem]{Lemma}
\theoremstyle{definition}
\newtheorem{remark}[theorem]{Remark}}
\numberwithin{equation}{section}
\newtheorem{thm}{Theorem}
\title[Non-linear differential equations
of Tumura-Clunie type]{On meromorphic solutions of non-linear differential equations
of Tumura-Clunie type}
\author[Heittokangas]{Janne Heittokangas}
\author[Latreuch]{Zinelaabidine Latreuch}
\author[Wang]{Jun Wang}
\author[Zemirni]{Mohamed Amine Zemirni}
\address[J.~Heittokangas]{University of eastern Finland, Department of Physics and Mathematics,
P.O.~Box 111, 80101 Joensuu, Finland}
\email{janne.heittokangas@uef.fi}
\address[Z.~Latreuch]{Department of Mathematics, Laboratory of Pure and Applied Mathematics, University of Mostaganem (UMAB)}
\email{z.latreuch@gmail.com}
\address[J.~Wang]{School of Mathematical Sciences, Fudan University, Shanghai, P. R. China}
\email{majwang@fudan.edu.cn }
\address[M.~A.~Zemirni]{University of eastern Finland, Department of Physics and Mathematics,
P.O.~Box 111, 80101 Joensuu, Finland}
\email{amine.zemirni@uef.fi}
\begin{document}

%***************************** Abstract ************************
\begin{abstract}
Meromorphic solutions of non-linear differential equations of the form $f^n+P(z,f)=h$ 
are investigated, where $n\geq 2$ is an integer, $h$ is a meromorphic function, and  $P(z,f)$ is 
differential polynomial in $f$ and its derivatives with small functions as its coefficients.
In the existing literature this equation has been studied in the case when $h$ has the 
particular form $h(z)=p_1(z)e^{\alpha_1(z)}+p_2(z)e^{\alpha_2(z)}$,
where $p_1, p_2$ are small functions of $f$ and  $\alpha_1,\alpha_2$ are entire functions.
In such a case the order of $h$ is either a positive integer or equal to infinity.
In this article it is assumed that $h$ is a meromorphic solution of the linear differential equation
$h'' +r_1(z) h' +r_0(z) h=r_2(z)$ with rational coefficients $r_0,r_1,r_2$, and hence the order of
$h$ is a rational number. Recent results by Liao-Yang-Zhang (2013) and Liao (2015) follow
as special cases of the main results. 

\medskip
\noindent
\textbf{Keywords:}  Linear differential equation, non-linear differential equation, meromorphic functions, rational coefficients,
Tumura-Clunie theory.

\medskip
\noindent
\textbf{2010 MSC:} Primary 34M05; Secondary 30D35.
\end{abstract}
%************************************************************************

\maketitle
\section{Introduction and main results}
\thispagestyle{empty}

\subsection{\sc Background}

Among the most famous complex non-linear differential equations are the ones introduced by 
Briot-Bouquet, Malmquist-Yosida, Painlev\'e, Riccati and Schwarz \cite{hille, L}. Differing
from linear differential equations with analytic coefficients, the meromorphic nature of solutions of non-linear equations is not given, let alone the analyticity of solutions. For example, it took over
a century to find a rigorous proof for the fact that all local solutions of the innocent looking
Painlev\'e's first equation $f''=z+6f^2$ can be analytically continued to single-valued
meromorphic solutions in the complex plane~\cite{HL}. Moreover, it is known that some Riccati
equations with meromorphic coefficients do not possess meromorphic solutions \cite[p.~172]{L}.
Along with the meromorphic nature of solutions, finding the growth and value distribution of
solutions have been studied for many equation types. At times a very general equation type
such as the Malmquist-Yosida equation $(y')^n=R(z,y)$ is shown to reduce to a lot more specific equations \cite[Ch.~10]{L}.  

Here we consider transcendental meromorphic solutions $f$ of non-linear differential equations
of Tumura-Clunie type
		\begin{equation}\label{e1}
		f^n + P(z,f)=h(z),
		\end{equation}
where $n\ge 2$, $h$ is a meromorphic function, and $P(z,f)$ is differential polynomial in
$f$ and its derivatives with small functions $a(z)$ as its coefficients. This means that $T(r,a)=S(r,f)$, where $S(r,f)$ denotes any quantity such that $S(r,f)=o(T(r,f))$ as $r\to\infty$ outside of a possible exceptional set of finite linear measure. 

The origin of Tumura-Clunie theory lies in the following theorem by Tumura \cite{Tumura}, published in 1937.
The proof was completed by Clunie \cite{Clunie} 25 years later.

\begin{thm}\label{Th.A}
	Let $f$ and $g$ be entire functions, and let
	$$
	h=a_nf^n +a_{n-1}f^{n-1}+\cdots+a_1f+a_0 \,\,\, (a_n \not\equiv 0)
	$$
where $a_j(z)$ $(j=0,1,\ldots,n)$ are small functions of $f$. If $h=be^g$, where $b(z)$ is a small
function of $f$, then
	$$
	h=a_n\left( f+\frac{a_{n-1}}{na_n}\right) ^n.
	$$
\end{thm}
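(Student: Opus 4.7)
The strategy is to first normalize $h$'s polynomial expression by a translation that removes the $f^{n-1}$ term, and then exploit the rigidity coming from $h = be^{g}$ to force every remaining lower-order coefficient to vanish.

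First, I set $\phi := f + a_{n-1}/(na_n)$. Since $a_{n-1}/(na_n)$ is small with respect to $f$, we have $T(r,\phi) = T(r,f) + S(r,f)$ and $S(r,\phi) = S(r,f)$, and expanding the $n$-th power yields
\[
h \;=\; a_n\phi^n + \sum_{j=0}^{n-2} c_j(z)\phi^j,
\]
where each $c_j$ is an explicit polynomial expression in $a_0,\dots,a_n,1/a_n$ and is therefore small with respect to $f$. The assertion of the theorem is equivalent to showing that $R(z,\phi) := \sum_{j=0}^{n-2} c_j\phi^j$ vanishes identically.

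Assume for contradiction that $R\not\equiv 0$. The hypothesis $h=be^{g}$ ensures that all zeros and poles of $h$ come from $b$, so $N(r,h) + N(r,1/h) = S(r,f)$; combined with the lemma on the logarithmic derivative this yields $T(r,h^{(k)}/h) = S(r,f)$ for every $k\ge 1$, meaning that each $h^{(k)}$ equals $h$ times a small meromorphic function. Differentiating the depressed identity once and substituting these relations produces a new identity in which the $\phi^n$ term can be eliminated, leaving a differential polynomial in $\phi$ of $\phi$-degree at most $n-1$ with small coefficients and with a nontrivial contribution from $R$. A Clunie-type proximity bound applied to (a suitable factorization of) this identity gives $m(r,R(z,\phi)) = S(r,f)$; since $f$ is entire and the $c_j$ are small, one also has $N(r,R(z,\phi)) = S(r,f)$, and together these force $T(r,R(z,\phi)) = S(r,f)$. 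On the other hand, the Valiron--Mokhon'ko identity applied to $R$ (viewed as a polynomial in $\phi$ with small coefficients) gives $T(r,R(z,\phi)) = (\deg_\phi R)\,T(r,\phi) + S(r,f)$ whenever $\deg_\phi R \ge 1$, contradicting the transcendence of $f$. The residual case $R\equiv c_0\not\equiv 0$ reduces to $h - c_0 = a_n\phi^n$ and can be dismissed by a direct zero-counting argument, using $nN(r,1/\phi) = N(r,1/(h-c_0)) \ge T(r,h) - m(r,1/(h-c_0)) - S(r,f)$ against the smallness of $N(r,1/h)$.

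The chief obstacle, as I see it, is the Clunie-type elimination step: one must choose the right differential combination of $h,h',h'',\dots$ to kill the $\phi^n$ contribution while keeping every residual term controllable through smallness of the $c_j$, through $T(r,h^{(k)}/h) = S(r,f)$, and through $T(r,a_n), T(r,a_{n-1}) = S(r,f)$. A secondary subtlety is that $a_n$ may have isolated zeros and $a_{n-1}/(na_n)$ isolated poles, but the associated counting functions are bounded by $T(r,a_n) + T(r,a_{n-1}) = S(r,f)$ and can be absorbed into the error terms.
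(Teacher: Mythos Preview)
The paper does not contain a proof of Theorem~A. It is quoted as historical background, attributed to Tumura (1937) with the proof completed by Clunie (1962), and referenced via \cite{Tumura} and \cite{Clunie}; no argument for it appears anywhere in the paper. There is therefore nothing to compare your proposal against.

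On its own merits, your outline follows a standard strategy---depress the polynomial, exploit $N(r,1/h)=S(r,f)$ coming from $h=be^{g}$, and force the remainder $R$ to vanish---but the step you yourself flag as the ``chief obstacle'' is not actually carried out. After differentiating $h=a_n\phi^n+R(z,\phi)$ and writing $h'=(h'/h)h$, elimination of $\phi^n$ yields an identity of the shape
\[
\phi^{\,n-1}\bigl[(\gamma a_n-a_n')\phi-na_n\phi'\bigr]=R'(z,\phi)-\gamma R(z,\phi),\qquad \gamma=h'/h,
\]
and Clunie's lemma bounds $m(r,\cdot)$ of the bracketed linear factor, not of $R$. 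Bridging from this to $m(r,R(z,\phi))=S(r,f)$ requires further work (in Clunie's original argument, essentially an inductive descent through repeated differentiation and elimination), and your sketch does not supply it. The degenerate case $R\equiv c_0\not\equiv 0$ is also not quite dispatched by the inequality you write: one needs to compare $N(r,1/(h-c_0))$ against $T(r,h)$ via the second main theorem or a direct Nevanlinna first-main-theorem estimate using that $c_0$ is small, and the chain as stated is incomplete. So the proposal is a reasonable plan, but not yet a proof.
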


As an extension to Theorem \ref{Th.A}, Hayman \cite[\S3.5]{H} proved the following result.

\begin{thm}\label{Th.B}
	Let $f$ be a non-constant meromorphic function in $\C$, and let
	$$
	h(z):=f^n+P(z,f),
	$$
	where $P(z,f)$ is a differential polynomial of degree at most $n-1$ in $f$ and its derivatives.
	If the coefficients of $P(z,f)$ are small functions of $f$, and if
	$$
	N(r,f)+N\left( r,\frac{1}{h}\right)  =S(r,f),
	$$
	then
	$
	h(z)=\left( f(z)+\frac{\alpha(z)}{n}\right) ^n,
	$
	 where $\alpha$ is a small function of $f$.
\end{thm}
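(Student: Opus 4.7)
The natural strategy is to identify the required $\alpha$ from the leading coefficient of $P$ and reduce the claim to the vanishing of a lower-order residual. Let $\alpha(z)$ be the coefficient of the pure monomial $f^{n-1}$ in $P(z,f)$, a small function of $f$, and put $g:=f+\alpha/n$; then $T(r,g)=T(r,f)+S(r,f)$ and $N(r,g)=S(r,f)$. The binomial expansion
\[
g^n=f^n+\alpha f^{n-1}+\sum_{k=2}^{n}\binom{n}{k}\!\left(\frac{\alpha}{n}\right)^{\!k}\!f^{n-k}
\]
combined with $h=f^n+P$ gives $h=g^n+Q(z,f)$, where $Q$ is a differential polynomial in $f$ of total degree at most $n-1$ with no pure $f^{n-1}$-monomial. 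The conclusion amounts to $Q\equiv 0$.

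Next I would gather the Nevanlinna-theoretic consequences of the hypotheses. From $N(r,f)=S(r,f)$ and the logarithmic-derivative lemma, $T(r,f^{(k)})=T(r,f)+S(r,f)$ for every $k\ge 0$, and $N(r,h)=S(r,f)$. The Mohon'ko identity applied to $h=f^n+P$ yields the sharp equality $T(r,h)=n\,T(r,f)+S(r,f)$; combined with $N(r,1/h)=S(r,f)$, the first fundamental theorem gives $m(r,h)=m(r,1/h)=n\,T(r,f)+S(r,f)$. A standard Clunie-type estimate applied to $Q$, of total degree at most $n-1$, yields
\[
m\!\left(r,\frac{Q(z,f)}{f^{n-1}}\right)=S(r,f),
\]
and since $f/g$ differs from $1$ by a small function, the same bound holds with $g$ in place of $f$.

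Finally, I would argue by contradiction. Suppose $Q\not\equiv 0$ and set $\phi:=Q/g^n$. If I can show $T(r,\phi)=S(r,f)$, then from $Q=g^n\phi$ the growth identity $T(r,g^n\phi)=n\,T(r,f)+S(r,f)$ would conflict with the degree bound $T(r,Q)\le(n-1)T(r,f)+S(r,f)$, forcing $T(r,f)\le S(r,f)$, which is impossible for transcendental $f$. The principal obstacle is therefore to prove $T(r,\phi)=S(r,f)$. Although the proximity term $m(r,\phi)\le m(r,Q/g^{n-1})+m(r,1/g)$ reduces to $S(r,f)+m(r,1/g)$ via the Clunie-type bound, and $N(r,\phi)\le S(r,f)+n\,\overline N(r,1/g)$, both are controlled only up to the a~priori unbounded quantity $T(r,1/g)$. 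To close the argument one must propagate the hypothesis $N(r,1/h)=S(r,f)$ through the relation $h=g^n+Q$: at a zero of $g$ where $Q\ne 0$ we have $h=Q\ne 0$, so such zeros do not contribute to $N(r,1/h)$, yet they do create poles of $\phi$; disentangling this interplay via a refined Clunie-type argument is the crux of the proof.
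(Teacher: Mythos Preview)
First, a remark on the comparison task: the paper does \emph{not} prove Theorem~B. It is quoted as a background result from Hayman's book \cite[\S3.5]{H}, so there is no ``paper's own proof'' to compare against. I will therefore assess your proposal on its merits and contrast it with the standard Hayman argument.

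Your proof has a genuine gap, and it lies earlier than the place where you yourself flag difficulty. The identification of $\alpha$ as the coefficient of the \emph{formal} monomial $f^{n-1}$ in $P(z,f)$ is incorrect. The small function $\alpha$ in the conclusion is determined by $f$ and $h$ together, not by the formal shape of $P$. Concretely, take $n=2$, $f(z)=e^{z}+1$, and $P(z,f)=-2f'-1$. Then $h=f^{2}+P=e^{2z}$, so $N(r,f)+N(r,1/h)=0$, and the conclusion holds with $\alpha=-2$ since $h=(f-1)^{2}$. But the coefficient of the pure monomial $f$ in $P=-2f'-1$ is $0$; your construction gives $g=f$, $Q=h-f^{2}=-2e^{z}-1\not\equiv0$, and the whole strategy of proving $Q\equiv0$ collapses. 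The same $P$ can be rewritten as $-2f+1$ (using $f'=f-1$), which \emph{would} give the right $\alpha$; the point is that your $\alpha$ depends on the representation of $P$, while the theorem's $\alpha$ does not.

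The standard route avoids this by first extracting information from $h$ rather than from $P$. One sets $\phi:=h'/h$; the hypothesis gives $T(r,\phi)=S(r,f)$. Differentiating $h=f^{n}+P$ and eliminating $h'$ yields
\[
f^{n-1}\bigl(\phi f-nf'\bigr)=P'(z,f)-\phi\,P(z,f),
\]
whose right side has degree at most $n-1$, so Clunie's lemma together with the pole hypothesis gives $T(r,\phi f-nf')=S(r,f)$. Thus $f'=\tfrac{\phi}{n}f+(\text{small})$, and by iteration every derivative $f^{(k)}$ is a linear expression in $f$ with small coefficients. Substituting back, $P(z,f)$ becomes an ordinary polynomial in $f$ of degree at most $n-1$ with small coefficients, and \emph{now} the ``complete the $n$-th power'' step is legitimate; the correct $\alpha$ is the coefficient of $f^{n-1}$ in this reduced polynomial. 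Your attempt skips the reduction $f'=\text{(small)}\cdot f+\text{(small)}$, which is precisely what makes the identification of $\alpha$ well defined and what ultimately controls the zeros of $g=f+\alpha/n$ that you were unable to bound at the end.
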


Since \cite{H}, many theorems of Tumura-Clunie type have appeared in the literature,
see \cite{MS,R,Y,Yi} and the references therein. Typical applications of these theorems are
among differential polynomials and non-linear differential equations.

In all previous articles \cite{Li2,LYZ,Liao,LLW,YL} the equation \eqref{e1} has been studied
in the case when $h$ has the particular form
		\begin{equation}\label{fff}
		h(z)=p_1(z)e^{\alpha_1(z)}+p_2(z)e^{\alpha_2(z)},
		\end{equation}
where $p_1, p_2$ are small functions of $f$ and  $\alpha_1,\alpha_2$ are entire functions.
For example, Yang and Li \cite{YL} showed that the differential equation
	$$
	f^{3}+\frac{3}{4}f''=h(z),
	$$
where $h(z)=-\frac14\sin3z=-\frac{1}{8i}\left(e^{3iz}-e^{-3iz}\right)$,
has exactly three non-constant entire solutions: $f_1(z)=\sin z,\, f_2(z)=\frac{\sqrt{3}}{2}\cos z-\frac{1}{2} \sin z$ and $f_3(z)=-\frac{\sqrt{3}}{2}\cos z-\frac{1}{2} \sin z$.

In the case of \eqref{fff}, the order of $h$ is either an integer or equal to infinity.
In this article, we suppose that $h$ is a meromorphic solution of the equation
		\begin{equation}\label{e2}
		h'' +r_1(z) h' +r_0(z) h=r_2(z),
		\end{equation}
where $r_0,r_1$ and $r_2$ are rational functions. In this case, the order of $h$ can be a
non-integer, and $h$ can be a special function such as the Airy integral.
These cases have not been covered in the previous studies. 
An additional bonus of the use of \eqref{e2} is that it
allows us to shorten some of the calculations which appear in the proofs. 

The idea that a coefficient of a complex differential equation is a solution of another
differential equation is not new. For example, the solutions of
	$
	f''+Af'+Bf=0
	$
are studied in \cite{WLHQ, WW} under the assumption that $A$ solves the differential equation
$g''+Pg=0$, where $P$ is a polynomial.

The cases when
$h$ has $n$ exponential terms in \eqref{fff} or when \eqref{e2} is replaced with $n$th order
linear differential equation are still open for further research.

\subsection{\sc Notation}

The differential polynomial $P(z,f)$ in \eqref{e1} is a finite sum
	\[
		P(z,f)=\sum_{j=1}^{\ell}M_j[f],
		\]
where $M_j[f]=a_j f^{n_{0j}} (f')^{n_{1j}}\cdots(f^{(k)})^{n_{kj}}$ is a differential
monomial generated by $f$, the powers $n_{0j}, n_{1j},\dots, n_{kj}$ are non-negative integers,
and $a_j(z)$ is a small function
of $f$ (or, in particular, a rational function). The expressions
	$$
	\gamma_{M_{j}}=\sum\limits_{s=0}^{k}n_{sj}\quad\textnormal{and}\quad
	\Gamma_{M_{j}}=\sum\limits_{s=0}^{k}(s+1)n_{sj}
	$$
are called the degree of $M_j$ and the
weight of $M_j$, respectively. Then we say that $P(z,f)$ has the degree
$\gamma_P=\max\limits_{1\leq j\leq \ell}\gamma_{M_{j}}$ and the weight $\Gamma_P=\max\limits_{1\leq j\leq \ell}\Gamma_{M_{j}}$. The differential polynomial $P(z,f)$ is linear if $\gamma_P \le 1$, otherwise, it is non-linear.

Throughout the paper, we make use of the notation
	\begin{equation}\label{global}
	r_0(z)\sim C_{0}z^m\quad \textnormal{and}\quad r_1(z)\sim C_{1}z^l,\ \textnormal{as}\ z\to\infty,
	\end{equation}
where $r_0,r_1$ are the coefficients in \eqref{e2}, $C_0,C_1\in\C$, $C_0\neq 0$ and $m,l\in\Z$.

For $a\in\widehat{\C}$ and $k\in\N$, we denote the integrated  counting function of the $a$-points
of a meromorphic $f$ of multiplicity at most $k$ by $N_{k)}(r,a,f)$, each $a$-point being counted according to its multiplicity. Further, $N_{(k}(r,a,f)$ denotes the integrated counting function of the $a$-points of $f$
of multiplicity at least $k$, again each $a$-point being counted according to its multiplicity.
Then $N(r,a,f)=N_{k)}(r,a,f)+N_{(k+1}(r,a,f)$.
Finally, $\rho(f)$ denotes the order of $f$, while $\lambda(f)$ and $\lambda(1/f)$ denote
respectively the exponents of convergence of the zeros of $f$ and the poles of $f$.

\subsection{\sc Main results}

In this subsection we settle for stating our two main results regarding the meromorphic solutions $f$ of \eqref{e1}.
Examples of solutions of \eqref{e1} as well as corollaries and further discussions on our
main results can be found in the later sections. 

The first result shows that either $f$ has finitely many zeros and poles or else the number of zeros and poles of $f$ restrict the growth of $T(r,f)$ in a specific way. 

\begin{theorem}\label{theo1.0}
	Let $f$ and $h$ be  meromorphic solutions of \eqref{e1} and \eqref{e2}, respectively, and assume that
	$f$ is transcendental. Then one of the following holds:
	\begin{itemize}
	\item[(1)]  $\rho(h)$ is an integer, $f$ is of the form $f(z)=q(z)e^{\alpha (z)}$, where $q$ is a rational function, $\alpha$ is a non-constant polynomial, and
		\begin{equation}\label{characteristics}
		T(r,h)=nT(r,f)+S(r,f).
		\end{equation}
	Furthermore, if $r_1$ and $r_0$ are polynomials, then $q$ is a constant.
	
	\item[(2)] If $ n\ge j+1 $ and $ \gamma_P \le n-j $ for some integer $ j \ge 1 $, then
		$$
		T(r,f) \le \frac{2}{j} \overline{N} \left(r, \frac{1}{f} \right) + N(r,f) + \frac{2}{j} \overline{N} (r, f ) + S(r,f).
		$$
	\end{itemize}
\end{theorem}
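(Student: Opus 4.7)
The plan is to treat the theorem as a dichotomy on the size of the Nevanlinna counting functions of the zeros and poles of $f$: if they are too small for (2) to be informative, then the structural form in (1) is forced. As preliminaries, since $r_0,r_1,r_2$ are rational, any meromorphic solution $h$ of \eqref{e2} has only finitely many poles (located at the poles of the coefficients) and is of finite order, so $N(r,h)=O(\log r)$ and $\rho(h)<\infty$. From \eqref{e1}, the Valiron--Mohon'ko theorem together with the standard bound $T(r,f^{(s)})\le T(r,f)+s\overline{N}(r,f)+S(r,f)$ yields $T(r,h)\le nT(r,f)+S(r,f)$ and, when $\gamma_P<n$, $(n-\gamma_P)T(r,f)\le T(r,h)+S(r,f)$, so $\rho(f)\le\rho(h)<\infty$.

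For Case (2), assume $n\ge j+1$ and $\gamma_P\le n-j$, and rewrite \eqref{e1} as $f^n=h-P(z,f)$. At a zero of $f$ of multiplicity $\mu$, $f^n$ vanishes to order $n\mu$ while each monomial $M_l$ in $P$ vanishes to order at most $\mu\gamma_{M_l}-(\Gamma_{M_l}-\gamma_{M_l})\le \mu(n-j)+O(1)$; an analogous estimate holds at poles. Dividing \eqref{e1} by a suitable power of $f$ to isolate $f^{n-j}$ and then combining the first main theorem, the logarithmic derivative lemma, and a Clunie/Hayman-style analysis should produce the Nevanlinna bound in (2), with the weight $2/j$ arising from the gap $n\mu-\mu(n-j)=j\mu$ between the vanishing orders of $f^n$ and $P$ at zeros of $f$ (and an analogous gap at poles).

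For Case (1), suppose the estimate in (2) fails, so $\overline{N}(r,1/f)+N(r,f)+\overline{N}(r,f)=S(r,f)$. Combined with $\rho(f)<\infty$, Hadamard factorization gives $f=q(z)e^{\alpha(z)}$ with $q$ rational and $\alpha$ a polynomial, nonconstant since $f$ is transcendental. Substituting into \eqref{e1}, the terms of $P(z,f)$ take the form $\tilde a_l(z)e^{\gamma_{M_l}\alpha}$ with $\tilde a_l$ of slower growth than $e^\alpha$, so $T(r,h)=nT(r,f)+S(r,f)$ and $\rho(h)=\deg\alpha\in\N$. For the \emph{furthermore} statement, regroup $h=\sum_{k=0}^n b_k(z)e^{k\alpha(z)}$ and insert it into \eqref{e2}; a Borel-type independence argument forces the coefficient of $e^{n\alpha}$ to vanish, which, after dividing by $q^ne^{n\alpha}$ and setting $U=nf'/f=nq'/q+n\alpha'$, rearranges to the Riccati identity $U^2+U'+r_1U+r_0=0$. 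When $r_1,r_0$ are polynomials, a zero of $q$ of multiplicity $k\ge 1$ would give $U$ a simple pole with residue $nk\ge 2$, so $U^2+U'$ would have a double pole with coefficient $nk(nk-1)\ne 0$, while $r_1U+r_0$ has only a simple pole there; a parallel analysis at a pole of $q$ of multiplicity $m$ yields the coefficient $nm(nm+1)\ne 0$. Both contradictions force $q$ to be constant.

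The main obstacle is the sharp derivation of the $2/j$ coefficient in Case (2): the pointwise order-of-vanishing comparisons are routine, but translating them into a Nevanlinna inequality with the correct constant requires careful Clunie-style bookkeeping rather than an off-the-shelf appeal to Theorem \ref{Th.B}.
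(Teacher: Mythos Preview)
Your overall dichotomy is set up incorrectly, and this cascades into both cases. The theorem is not proved by showing that (2) holds generically and that (1) is what remains when the counting functions happen to be $S(r,f)$. The paper's dichotomy is on an auxiliary function $\varphi$ obtained by differentiating \eqref{e1} twice and substituting into \eqref{e2} so as to \emph{eliminate $h$ completely}:
\[
f^{n-2}\varphi=Q(z,f),\qquad \varphi=r_0f^2+nr_1f'f+nf''f+n(n-1)(f')^2,
\]
with $\gamma_Q=\gamma_P\le n-j$. The split is $\varphi\equiv 0$ versus $\varphi\not\equiv 0$.

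Your sketch for Case~(2) never uses \eqref{e2}, and this is fatal. In $f^n=h-P(z,f)$ the function $h$ is not small---$T(r,h)\asymp nT(r,f)$---so it cannot be fed into a Clunie/Hayman argument as a coefficient. Your vanishing comparison at a zero of $f$ only says that $h$ must vanish there to some order, which constrains $h$, not $f$. The paper instead writes $f^{n-j}\varphi_j=Q(z,f)$ with $\varphi_j=f^{j-2}\varphi$, applies Lemma~\ref{Clunie} to get $m(r,\varphi_j)=S(r,f)$, and then does the local bookkeeping on $\varphi_j$: a pole of $f$ of multiplicity $k$ is a pole of $\varphi_j$ of multiplicity $jk+2$, and a zero of $f$ of multiplicity $k$ is a zero of $\varphi_j$ of multiplicity $jk-2$. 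Those two extra $2$'s are the source of the $2/j$.

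Your Case~(1) has two independent gaps. First, ``the estimate in (2) fails'' does not imply $\overline{N}(r,1/f)+N(r,f)+\overline{N}(r,f)=S(r,f)$; it only gives a strict inequality along some sequence of radii. Second, even granting that hypothesis, $S(r,f)$ counting functions together with finite order do \emph{not} make $q$ rational via Hadamard: one can have, say, $N(r,1/f)\asymp r$ with $T(r,f)\asymp r^2$. In the paper, $\varphi\equiv 0$ is exactly the statement $(f^n)''+r_1(f^n)'+r_0f^n=0$, and Lemma~\ref{11} then forces $f^n$ to have finitely many multiple zeros and finitely many poles; since $n\ge 2$, $f$ itself has finitely many zeros and poles, which is what makes $q$ genuinely rational.

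Your Riccati argument for the ``furthermore'' clause is correct and is a nice alternative to the paper's route. The paper observes that when $r_0,r_1$ are polynomials, Lemma~\ref{11} makes $f^n$ entire with only simple zeros, hence (as $n\ge2$) $f$ is entire and zero-free, so $q$ is constant. Your residue computation on $U=nf'/f$ in $U^2+U'+r_1U+r_0=0$ reaches the same conclusion more directly---but note that it presupposes the identity $(f^n)''+r_1(f^n)'+r_0f^n=0$, which in your framework you have not legitimately obtained.
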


Meromorphic solutions $f$ with few poles in the sense of $N(r,f)=S(r,f)$ are frequently studied
in the literature, not only for the equation \eqref{e1}. If $f$ is such a solution of \eqref{e1}
and if $j\geq 3$, then only the Case (1) in Theorem~\ref{theo1.0} can occur. The next result gives
a further account of this situation. In particular, $f$ may have a finite non-integer order $\rho(f)$,
in which case $\lambda(1/f)=\rho(f)$ holds.

\begin{theorem}\label{theo1.3}
Let $n\ge 3$, and let $f$ be a transcendental meromorphic solution of \eqref{e1}, where $h$ is a transcendental meromorphic solution of \eqref{e2}, $P(z,f)$ has rational coefficients, and $\gamma_P\le n-2$. If $N(r,f)=S(r,f)$, then $f$ has finitely many poles, $\rho(h)=\rho(f)$, and one of the following holds:
	\begin{itemize}
		\item[(1)] The conclusion of Theorem~\ref{theo1.0}(1) holds.
		
		\item[(2)] $T(r,f)=N_{1)}(r,1/f)+O(\log r)$, the function $f$ is of order $1+m/2$, and one of the following two situations for the parameters in \eqref{global} occur.
			\begin{enumerate}
			\item[(i)] We have $l\leq -1\leq m$. Moreover,
			$$
			\log M(r,f)=\frac{2\sqrt{\left| C_{0}\right| }}{n(m+2)}r^{1+m/2}(1+o(1)),\quad r\to\infty.
			$$
			\item[(ii)]  We have $m=2l\ge 0$ and
				$
				{C_0}=\frac{n(n-1)}{(2n-1)^2}C_1^2.
				$
			Moreover,
			$$
			\log M(r,f)= \frac{2\sqrt{\left|C_0\right|}}{\sqrt{n(n-1)}(m+2)}r^{1+{m}/{2}}(1+o(1)),\quad r\to\infty.
			$$
		\end{enumerate}
	\end{itemize}
\end{theorem}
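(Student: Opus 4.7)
The plan is to first apply a standard Clunie-type estimate to $f^n+P(z,f)=h$, which gives $T(r,P(z,f))\le (n-2)T(r,f)+S(r,f)$ and therefore
\[
2\,T(r,f)\;\le\;T(r,h)+S(r,f)\;\le\;(2n-2)\,T(r,f)+S(r,f),
\]
yielding $\rho(h)=\rho(f)$; in particular $\rho(f)<\infty$, so the $S(r,f)$ terms are in fact $O(\log r)$. Next, I would invoke Theorem~\ref{theo1.0} with $j=2$, which is permitted since $n\ge 3$ and $\gamma_P\le n-2$. If Case~(1) of Theorem~\ref{theo1.0} holds, we are directly in conclusion~(1) of Theorem~\ref{theo1.3}, and $f=qe^{\alpha}$ automatically has finitely many poles; otherwise the hypothesis $N(r,f)=S(r,f)$ (hence $\overline{N}(r,f)=S(r,f)$) reduces the bound of Theorem~\ref{theo1.0}(2) to
\[
T(r,f)\;=\;\overline{N}\!\left(r,\tfrac{1}{f}\right)+S(r,f),
\]
and we continue under this hypothesis aiming at conclusion~(2).

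I would then apply classical asymptotic theory of linear differential equations with rational coefficients to~\eqref{e2}. The order $\rho(h)$ and the leading asymptotic of $\log M(r,h)$ are determined by the dominant characteristic roots of $t^2+r_1t+r_0$ at infinity. In the ``dominant-$r_0$'' regime $l\le -1\le m$ of (i), $\rho(h)=1+m/2$ and $\log M(r,h)=\tfrac{2\sqrt{|C_0|}}{m+2}r^{1+m/2}(1+o(1))$. In the ``balanced'' regime $m=2l\ge 0$ of (ii), $\rho(h)=1+l$ with two candidate characteristic roots $\mu_{\pm}=\tfrac{-C_1\pm\sqrt{C_1^2-4C_0}}{2}$. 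In any remaining ``dominant-$r_1$'' regime ($l\ge 0$, $m<2l$), $\rho(h)$ is a positive integer, and the comparability of $T(r,h)$ and $T(r,f)$ above forces $f=qe^{\alpha}$ with $\alpha$ polynomial, i.e.\ we land back in Case~(1) of Theorem~\ref{theo1.0}, contradicting our standing assumption. Hence we are in (i) or (ii); in case~(ii), the rigid algebraic constraint $C_0=\tfrac{n(n-1)}{(2n-1)^2}C_1^2$ arises by demanding that the formal $n$-th root of the WKB expansion of $h$ represent a single-valued meromorphic function, equivalently $\mu_{+}/\mu_{-}=(n-1)/n$.

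For the finitely-many-poles claim (in case (2)), a meromorphic solution of \eqref{e2} can have poles only at the (finitely many) poles of the rational $r_0,r_1,r_2$, so $h$ has finitely many poles. At a pole $z_0$ of $f$ of multiplicity $\mu$ lying away from the exceptional set, comparing pole orders in $f^n+P(z,f)=h$ forces $n\mu\le (n-2)\mu+\Gamma_P$, i.e.\ $\mu\le\Gamma_P/2$. Combining this uniform pole-order bound with $N(r,f)=S(r,f)$ and the sectorial WKB asymptotics of $h$ (which prevent an infinite sequence of poles of $f$ from accumulating along any ray at infinity), one concludes that $f$ has only finitely many poles.

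For the sharp identity $T(r,f)=N_{1)}(r,1/f)+O(\log r)$, substitute $h=f^n+P(z,f)$ into \eqref{e2} to obtain
\[
f^{n-2}\!\left[n(n-1)(f')^2+nff''+nr_1ff'+r_0 f^2\right]\;=\;r_2-P''(z,f)-r_1 P'(z,f)-r_0 P(z,f);
\]
dividing by $f^n$ and applying the lemma on the logarithmic derivative shows both sides have proximity $O(\log r)$, and a counting-function comparison at zeros of $f$ of multiplicity $\ge 2$ (which contribute with extra weight to the left hand side) bounds the number of such zeros by $O(\log r)$, whence $T(r,f)=N_{1)}(r,1/f)+O(\log r)$ and $\rho(f)=1+m/2$. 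The precise asymptotics for $\log M(r,f)$ follow from $f^n\sim h$: in (i), $\log M(r,f)=\tfrac{1}{n}\log M(r,h)(1+o(1))$ gives the stated formula; in (ii), the dominant characteristic root is $\mu_{-}=-\tfrac{n}{2n-1}C_1$, so $\log M(r,h)=\tfrac{2n|C_1|/(2n-1)}{m+2}r^{1+m/2}(1+o(1))$, and substituting $|C_1|=\tfrac{(2n-1)\sqrt{|C_0|}}{\sqrt{n(n-1)}}$ yields the claimed formula. The main obstacle will be case~(ii), where extracting the algebraic relation $C_0=\tfrac{n(n-1)}{(2n-1)^2}C_1^2$ and the sharp $O(\log r)$ error term requires a careful subleading WKB analysis together with precise pole/zero bookkeeping for $f$.
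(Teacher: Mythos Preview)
Your proposal has the right opening moves (the Clunie estimate for $\rho(f)=\rho(h)$, the appeal to Theorem~\ref{theo1.0} with $j=2$), but there are several genuine gaps compared with the paper's argument.

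\textbf{Finitely many poles.} Your argument does not work: bounding the pole multiplicity by $\Gamma_P/2$ and then invoking ``sectorial WKB asymptotics of $h$'' does not exclude infinitely many poles of bounded multiplicity with $N(r,f)=S(r,f)$. The paper handles this by a completely different mechanism: once $\varphi\not\equiv0$ in \eqref{e7}, one constructs the auxiliary function $\psi$ in \eqref{e20}, rewrites $\varphi=a f^2+b f'f+c(f')^2$ with $c=n(n-1)$ and small $a,b$, and applies Li's lemma (Lemma~\ref{liao}). A Laurent expansion at any pole $z_0$ of $f$ then forces $r_1$ (in sub-case~(i)) or $r_0$ (in sub-case~(ii)) to have a pole at $z_0$, so the poles of $f$ lie among the finitely many poles of $r_0,r_1$. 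This local argument is the crux and is absent from your sketch.

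\textbf{The dichotomy (i)/(ii) and the constraint $C_0=\tfrac{n(n-1)}{(2n-1)^2}C_1^2$.} You try to read these off the characteristic roots of $t^2+r_1 t+r_0$ for $h$, arguing that a ``dominant-$r_1$'' regime would force $f=qe^{\alpha}$. That inference is unjustified: integer order of $h$ does not by itself put you back in Case~(1). In the paper the dichotomy is \emph{intrinsic to $f$}, not to $h$: it is exactly the alternative $b^2-4ac\not\equiv0$ versus $b^2-4ac\equiv0$ in Lemma~\ref{liao}. In the first case \eqref{r1} gives $r_1(z)=O(z^{-1})$ directly (hence $l\le -1$), and $f$ satisfies a second-order linear equation \eqref{fRS} to which Lemma~\ref{l2} applies. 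In the second case $\varphi$ is a perfect square, $f$ satisfies the \emph{first-order} equation \eqref{e27}, and substituting back into \eqref{e2} yields the scalar identity \eqref{e32}, from which $C_0/C_1^2=\tfrac{n(n-1)}{(2n-1)^2}$ drops out by letting $z\to\infty$. Your ``formal $n$-th root of the WKB expansion'' heuristic is not a proof of this.

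\textbf{The asymptotics of $\log M(r,f)$.} You write $\log M(r,f)=\tfrac1n\log M(r,h)(1+o(1))$ from ``$f^n\sim h$'', but in Case~(2) there is no relation $T(r,h)=nT(r,f)+S(r,f)$ (that is Case~(1) only), so this is unjustified. The paper obtains the formula in~(i) by applying Wiman--Valiron directly to \eqref{e7} (with $\varphi$ rational) to get $\nu(r)^2\sim |C_0|r^{m+2}/n^2$, and in~(ii) by applying Lemma~\ref{l3} to the first-order equation \eqref{e27}.

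In short, the missing ingredient throughout is the quadratic-form representation $\varphi=af^2+bf'f+c(f')^2$ together with Lemma~\ref{liao}; this is what produces the auxiliary linear ODE for $f$, the (i)/(ii) split, the pole-exclusion argument, and the precise growth constants.
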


\begin{remark}
(1) We will show in Section~\ref{corollaries-sec} that a consequence of Theorem~\ref{theo1.3} generalizes and improves
\cite[Theorem~1]{LYZ} and \cite[Theorem~1.7]{Liao}.

(2) From the proof of Theorem~\ref{theo1.3}, it follows in the sub-case (i) that $f$ satisfies a second-order differential equation
			\begin{equation*} %\label{fRS0}
			f'' +R(z) f'+S(z) f=0,
			\end{equation*}
where $R$ and $S$ are rational functions such that $|R(z)|\asymp |r_1(z)|$ and $|S(z)|\sim |r_0(z)|/ n^2 $, as $z\to \infty$.
In the sub-case (ii), $f$ satisfies a first order differential equation
	\[
			f'+S(z) f=Q,
			\]
where $S$ and $Q$ are non-zero rational functions and $|S(z)|\sim {|r_1(z)|}/(2n-1)$, as $z\to\infty$.
\end{remark}

\subsection{\sc Organization of the paper}
The rest of this paper is organized as follows. 
Corollaries of our main results can be found in Section~\ref{corollaries-sec}, where we find that  \cite[Theorem~1]{LYZ} and \cite[Theorem~1.7]{Liao} reduce to special cases of a corollary of Theorem~\ref{theo1.3}. Section~\ref{examples-sec} consists of examples of meromorphic solutions of \eqref{e1}, which illustrate many possibilities that can happen. In particular, solutions of finite non-integer order do occur. Section~\ref{lemmas-sec} consists of general auxiliary results from the existing literature, as well as on new lemmas on linear differential equations with rational coefficients. 
Finally, proofs for Theorems~\ref{theo1.0} and~\ref{theo1.3} can be found in Section~\ref{proofs-sec}, while proofs of their corollaries are in Section~\ref{proofs-cor-sec}.

%%%%%%%%%%%%%%%%%%%%%%%%%%%%%%%%%%

\section{Corollaries}\label{corollaries-sec}

%%%%%%%%%%%%%%%%%%%%%%%%%%%%%%%%%%

We proceed to express Theorem~\ref{theo1.0} in terms of the deficiencies
	\begin{equation*}
	\delta(a,f)=1-\limsup_{r\to\infty}\frac{N(r,a,f)}{T(r,f)}\quad\textnormal{and}\quad
	\Theta(a,f)=1-\limsup_{r\to\infty}\frac{\overline{N}(r,a,f)}{T(r,f)},
	\end{equation*}
familiar from the basic Nevanlinna theory. We have 	
	$
	0\leq\delta(a,f)\le \Theta(a,f)\le 1
	$
for any $a\in \widehat{\C}$. If the $a$-points of $f$ are simple (apart from
finitely many exceptions), then these two deficiencies are equal.

\begin{corollary}\label{theo1.1}
	Let $f$ and $h$ be  transcendental meromorphic solutions of \eqref{e1} and \eqref{e2},
	respectively. Suppose that one of the following holds:
	\begin{itemize}
	\item[(1)] $n\ge j+1$, $\gamma_P\le n-j$ and
			$
			\Theta(0,f)+\frac{j}{2} \delta(\infty,f)+\Theta(\infty,f)>2,
			$
	\item[(2)] $n\ge 6$ and $\Gamma_P\le n-5$.
	\end{itemize}
Then the conclusion of Theorem~\ref{theo1.0}(1) holds.
\end{corollary}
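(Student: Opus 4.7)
My plan is to rule out case~(2) of Theorem~\ref{theo1.0} under each of the stated hypotheses, so that case~(1) must hold. Assume, to the contrary, the inequality
\begin{equation*}
T(r,f) \le \frac{2}{j}\,\overline{N}\!\left(r,\frac{1}{f}\right) + N(r,f) + \frac{2}{j}\,\overline{N}(r,f) + S(r,f). \qquad (\star)
\end{equation*}
For hypothesis~(1), I would divide $(\star)$ by $T(r,f)$ and take $\limsup$ as $r\to\infty$ outside the exceptional set. Using $\limsup_{r\to\infty}\overline{N}(r,a,f)/T(r,f) = 1 - \Theta(a,f)$ and $\limsup_{r\to\infty} N(r,f)/T(r,f) = 1 - \delta(\infty,f)$, the inequality becomes
\[
1 \le \frac{2}{j}\bigl(1-\Theta(0,f)\bigr) + \bigl(1-\delta(\infty,f)\bigr) + \frac{2}{j}\bigl(1-\Theta(\infty,f)\bigr),
\]
which rearranges to $\Theta(0,f) + \frac{j}{2}\delta(\infty,f) + \Theta(\infty,f) \le 2$, directly contradicting the hypothesis.

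For hypothesis~(2), note that $\gamma_P \le \Gamma_P \le n-5$, so $(\star)$ applies with $j=5$. The crux is to prove, in addition, that $N(r,f) = S(r,f)$. First, because $h$ satisfies \eqref{e2} with rational coefficients, a local balance of pole orders in $h'' = r_2 - r_1 h' - r_0 h$ at a point $z_0$ that is not a pole of $r_0,r_1,r_2$ forces $h$ to be holomorphic there (otherwise the left-hand side would have pole order $p+2$ while the right-hand side has order at most $p+1$); hence $h$ has only finitely many poles, and $N(r,h) = O(\log r) = S(r,f)$. Second, at any pole $z_0$ of $f$ of multiplicity $p\ge 1$ at which no coefficient of $P$ is singular, each monomial $M_j$ of $P(z,f)$ has pole order exactly $(p-1)\gamma_{M_j} + \Gamma_{M_j} \le (p-1)(n-5) + (n-5) = p(n-5)$, while $f^n$ has pole order $np$. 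Since $np - p(n-5) = 5p > 0$, equation \eqref{e1} gives that $h$ has pole of exact order $np$ at $z_0$, whence $n\,N(r,f) \le N(r,h) + S(r,f) = S(r,f)$. Plugging $N(r,f) = \overline{N}(r,f) = S(r,f)$ into $(\star)$ yields $T(r,f) \le \tfrac{2}{5}T(r,f) + S(r,f)$, which is absurd for transcendental $f$.

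The main obstacle is the pole-counting step in hypothesis~(2): one must exploit the sharper bound on the weight $\Gamma_P$ (rather than only on the degree $\gamma_P$) and combine it with the rationality of the coefficients in \eqref{e2} to pin down the poles of $h$. Once $N(r,f)=S(r,f)$ is in hand, both hypotheses reduce $(\star)$ to an immediate contradiction, so the conclusion of Theorem~\ref{theo1.0}(1) follows in either case.
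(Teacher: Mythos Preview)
Your argument for hypothesis~(1) is correct and coincides with the paper's: both translate the deficiency assumption into the statement that the right-hand side of $(\star)$ is eventually strictly below $T(r,f)$, yielding $T(r,f)=S(r,f)$.

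For hypothesis~(2), your proof is correct but follows a genuinely different route from the paper. The paper never shows $N(r,f)=S(r,f)$. Instead, it works with the auxiliary functions $\varphi_j=f^{j-2}\varphi$ from the proof of Theorem~\ref{theo1.0}. Since $\Gamma_P\le n-5$, the differential polynomial $Q(z,f)=-(P''+r_1P'+r_0P-r_2)$ satisfies $\Gamma_Q\le n-3$; applying Clunie's lemma (Lemma~\ref{Clunie}) to $f^{n-2}\varphi_2=Q$ and Doeringer's lemma (Lemma~\ref{Clunie-Doeringer}) to $f^{n-3}\varphi_3=Q$ gives $T(r,\varphi_2)=S(r,f)$ and $T(r,\varphi_3)=S(r,f)$, whence $T(r,f)=T(r,\varphi_3/\varphi_2)=S(r,f)$. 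Your approach is more elementary in that it avoids Doeringer's pole-counting lemma and uses only the conclusion of Theorem~\ref{theo1.0}(2) as a black box, at the cost of an extra local analysis of the poles of $h$ via \eqref{e2}; the paper's approach is closer to the Tumura--Clunie machinery already set up in the proof of Theorem~\ref{theo1.0} and does not appeal to the pole structure of $h$ at all. One minor point of care in your write-up: the coefficients of $P(z,f)$ are only assumed to be small functions of $f$, not rational, so rather than excluding the finitely many singularities of the $a_j$ you should phrase the pole comparison as the integrated inequality $nN(r,f)=N(r,f^n)\le N(r,h)+(n-5)N(r,f)+\sum_j N(r,a_j)$, which still gives $5N(r,f)\le N(r,h)+S(r,f)=S(r,f)$.
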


Case (1) in Corollary~\ref{theo1.1} can be simplified by replacing the $\Theta$'s with corresponding
$\delta$'s, but at the expense of weakening the result. Example~\ref{sharpness-of-cor} below
shows the sharpness in the sense that ''$>2$'' cannot be replaced with ''$=2$''.

Next we state a consequence of Theorem~\ref{theo1.3}.

\begin{corollary}\label{CC-corollary}
Let $ n\ge 3$, and let $f$ be a transcendental meromorphic solution of \eqref{e1}, where $P(z,f)$ has rational coefficients, $\gamma_P \le n-2$, and $h$ is of the form
	\begin{equation} \label{hee}
	h(z)=p_{1}(z)e^{\alpha_{1}(z)}+p_{2}(z)e^{\alpha_{2}(z)}=:h_1(z)+h_2(z).
	\end{equation}
Here we assume that $p_{1},p_{2}$ are rational functions such that 
$p_{1}p_{2}\not\equiv 0$, while $\alpha_{1},\alpha_{2}$ are non-constant polynomials normalized such that $\alpha_1(0)=0=\alpha_2(0)$.
Write
	$$
	\alpha_j(z)=a_jz^{s_j}+O\left(z^{s_j-1}\right), \quad j=1,2.
	$$
If $N(r,f)=S(r,f) $, then $f$ has finitely many poles, $\rho(f)= \rho(h)=s_1=s_2$, 
and $f$ takes one of the following forms:
	\begin{enumerate}
		\item 	
		$
		 f(z) = q(z)e^{\alpha(z)},
		$
		where $q$ is a non-zero rational function and $\alpha$ is a non-constant polynomial normalized with $\alpha(0)=0$. Moreover,
		the following conclusions hold. 
		\begin{enumerate}
		\item[(i)] If $a_1=a_2$, then $n\alpha=\alpha_1=\alpha_2$, $q^n=p_1+p_2$ and $P(z,f)\equiv 0$. In particular, if $p_1,p_2$ are polynomials, then $q$ is also a polynomial.
		\item[(ii)] The case when $|a_1|=|a_2|$ and $a_1\neq a_2$ is not possible. 
		\item[(iii)] If $|a_1|\neq |a_2|$, say $|a_1|>|a_2|$, then $n\alpha=\alpha_1$, $q^n=p_1$ and $P(z,f)\equiv h_2$.
		\end{enumerate}
		
		\item  
		$
		f(z) = q_1(z)e^{\beta(z)} +q_2(z) e^{-\beta(z)},
		$
		where $q_1,q_2$ are non-zero rational functions and $\beta$ is a non-constant polynomial such that $n\beta=\pm \alpha_1$ and $\alpha_1=-\alpha_2$.
				
		\item  
		$
		f(z)=q_1(z)e^{\frac{\alpha_1(z)+\alpha_2(z)}{2n-1}} +q_2(z),
		$
		 where $q_1,q_2$ are non-zero rational functions and $\max\{|a_1|,|a_2|\}/\min\{|a_1|,|a_2|\} = n /(n-1) $.
	\end{enumerate}
\end{corollary}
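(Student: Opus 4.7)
The plan is to feed $h$ into Theorem~\ref{theo1.3}, which requires first showing that $h$ solves a suitable second-order linear ODE with rational coefficients, and then to match the resulting structure of $f$ against the two-exponential form of $h$. A direct Wronskian computation
\[
W(h_1, h_2) = e^{\alpha_1 + \alpha_2}\bigl[p_1 p_2 (\alpha_2' - \alpha_1') + W(p_1, p_2)\bigr]
\]
vanishes only when $\alpha_1 \equiv \alpha_2$ (using the normalization $\alpha_j(0) = 0$), so in the generic case $h_1, h_2$ span the solution space of a homogeneous equation $h'' + r_1 h' + r_0 h = 0$ with $r_0, r_1$ rational, via $r_1 = -W'/W$ and $r_0$ read off from $h_1'' + r_1 h_1' + r_0 h_1 = 0$. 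A leading-order calculation gives $C_1 \sim -s(a_1 + a_2) z^{s-1}$ and $C_0 \sim s^2 a_1 a_2 z^{2s-2}$ when $s := s_1 = s_2$ and $a_1 + a_2 \neq 0$. Theorem~\ref{theo1.3} then yields that $f$ has finitely many poles, $\rho(f) = \rho(h)$, and either (A) $f = q e^\alpha$ as in Theorem~\ref{theo1.3}(1), or (B) Theorem~\ref{theo1.3}(2) holds.

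In case (A), substituting $f = q e^\alpha$ into \eqref{e1} produces the exponential identity
\[
q^n e^{n\alpha} + \sum_j b_j e^{\gamma_{M_j} \alpha} = p_1 e^{\alpha_1} + p_2 e^{\alpha_2},
\]
with $b_j$ rational and $\gamma_{M_j} \leq n - 2$. Counting orders of growth forces $s_1 = s_2 = \deg \alpha$, and a Borel-type lemma for exponential sums with small coefficients matches exponents: $n\alpha$ must equal $\alpha_1$ or $\alpha_2$. If $a_1 = a_2$, iterated matching forces $\alpha_1 = \alpha_2 = n\alpha$, $q^n = p_1 + p_2$, and $P \equiv 0$, giving sub-case (i). If $|a_1| = |a_2|$ with $a_1 \neq a_2$, no admissible $\gamma_{M_j}\alpha$ can supply two distinct leading exponents of the same modulus, producing a contradiction that rules out sub-case (ii). If $|a_1| > |a_2|$, matching forces $n\alpha = \alpha_1$, $q^n = p_1$, and $P(z,f) \equiv h_2$, giving sub-case (iii).

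In case (B), sub-case (i) of Theorem~\ref{theo1.3}(2) requires $l \leq -1$, which by the leading-order asymptotics above forces all coefficients of $\alpha_1 + \alpha_2$ to vanish, hence $\alpha_1 = -\alpha_2$. The remark following Theorem~\ref{theo1.3} then produces a second-order linear ODE for $f$ with rational coefficients; its solutions, combined with the finite-pole condition and the growth estimate from Theorem~\ref{theo1.3}, must be of the form $f = q_1 e^\beta + q_2 e^{-\beta}$. Substituting into \eqref{e1} and matching top exponents gives $n\beta = \pm \alpha_1$, i.e.\ case (2) of the corollary. In sub-case (ii) the relation $C_0 = n(n-1) C_1^2/(2n-1)^2$ rewrites as $a_1 a_2/(a_1+a_2)^2 = n(n-1)/(2n-1)^2$, which solves to $a_1/a_2 \in \{n/(n-1),\,(n-1)/n\}$, whence $\max\{|a_1|,|a_2|\}/\min\{|a_1|,|a_2|\} = n/(n-1)$. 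The same remark supplies a first-order equation $f' + S f = Q$; integrating with $-\int S$ coinciding to top order with $(\alpha_1 + \alpha_2)/(2n-1)$ yields $f = q_1 e^{(\alpha_1 + \alpha_2)/(2n-1)} + q_2$, i.e.\ case (3).

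The main obstacle will be the Borel-type matching in step (A) when the leading coefficients $a_1, a_2$ agree: one must descend through several lower-order terms of $\alpha_1$ and $\alpha_2$ to conclude $\alpha_1 \equiv \alpha_2$ rather than mere agreement of their top coefficients, and a similar iterative argument is needed to force $\alpha_1 + \alpha_2 \equiv 0$ in case (B)(i). A secondary technical point is extracting the explicit two-term exponential form of $f$ from the low-order equations supplied by the remark, which relies on standard growth and value-distribution theory for linear ODEs with rational coefficients together with the finite-pole and finite-order conditions already available.
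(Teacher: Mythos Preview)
Your overall strategy matches the paper's: construct the equation \eqref{e2} for $h$ via the Wronskian of $h_1,h_2$, apply Theorem~\ref{theo1.3}, then sort the outcomes. The handling of case (A) by a Borel-type argument is fine; the paper instead uses the Steinmetz formula $T(r,h)=(|a_1|+|a_2|+|a_1-a_2|)\,r^s/(2\pi)\,(1+o(1))$ together with \eqref{characteristics} to obtain $n|a|=\max\{|a_1|,|a_2|\}$ directly, which is slicker but not essentially different.

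There is, however, a genuine gap in case (B). In sub-case (i) you invoke only the second-order equation \eqref{fRS} supplied by the remark and assert that its solutions ``must be of the form $q_1e^\beta+q_2e^{-\beta}$.'' This does not follow: a second-order linear equation with rational coefficients and dominant term $\sim C_Sz^m$ can have solutions with infinitely many zeros (Airy-type behaviour), and neither the finite-pole condition nor the growth estimate excludes this. What you call ``standard growth and value-distribution theory'' does not deliver the exponential form here. The paper instead goes back \emph{inside} the proof of Theorem~\ref{theo1.3}: once $\alpha_1+\alpha_2\equiv 0$ is known, the Wronskian $W$ is rational, so $r_1=-W'/W$, and then \eqref{r1} integrates exactly to $\sqrt{b^2-4ac}=C\,W(b^2-4ac)^n/\varphi^n$, forcing $\sqrt{b^2-4ac}$ to be rational. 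This lets \eqref{e22} factor as $\varphi/c=(f'-R_-f)(f'-R_+f)$ with $R_\pm$ rational, and each first-order factor then yields $f'-R_\mp f=\Pi_je^{\pm\beta}$, from which the claimed form of $f$ follows by subtraction.

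The same defect recurs in sub-case (ii): integrating $f'+Sf=Q$ gives the exponent only ``to top order,'' which is not enough to conclude $\beta=(\alpha_1+\alpha_2)/(2n-1)$ as a polynomial identity. Again the paper exploits $r_1=-W'/W$: combined with \eqref{e27} this gives $(2n-1)f_0'/f_0=W'/W-Q'/Q$, which integrates to $f_0^{2n-1}=CW/Q=p_0e^{\alpha_1+\alpha_2}$ and hence the exact exponent. In both sub-cases the missing idea is that the rational Wronskian provides an exact logarithmic primitive for $r_1$, not merely asymptotic information; without it the equations from the remark are too coarse to pin down the exponential structure of $f$.
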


If $h_1,h_2$ are linearly independent, then Corollary~\ref{CC-corollary} shows that the assumption 
''$f$ has finitely many poles'' in \cite[Theorem~1]{LYZ} can be replaced with the much less
restrictive assumption ''$N(r,f)=S(r,f)$''.
The same observation applies to \cite[Theorem~1.7]{Liao} in the case when $h_1,h_2$ are linearly dependent.

As noted in \cite{LYZ}, a quick example of Case (3) is the solution $f(z)=e^z+z+1$ of the equation
	\begin{equation}\label{CC-example}
	f^3-2(z+1)^2f''-(z+1)^2f=e^{3z}+3(z+1)e^{2z}.
	\end{equation}
Some new examples of meromorphic solutions are given in Section~\ref{examples-sec}.
The proofs of Corollaries~\ref{theo1.1} and \ref{CC-corollary} are postponed to Section~\ref{proofs-cor-sec}.

%%%%%%%%%%%%%%%%%%%%%%%%%%%%%%%%%%

\section{Examples of solutions}\label{examples-sec}

%%%%%%%%%%%%%%%%%%%%%%%%%%%%%%%%%%

The first example proves the sharpness of Corollary~\ref{theo1.1} in the sense that the
assumption ''$>2$'' cannot be replaced with ''$=2$''.

\begin{example}\label{sharpness-of-cor} 
The meromorphic function $f(z)=e^{2z}/(e^z-1)$ has no zeros and it satisfies
		\begin{eqnarray*}
		T(r,f) &=& 2r/\pi + O(1),\\
		\overline{N}(r,f) &=& N(r,f) = r /\pi + O(1).
		\end{eqnarray*}
Thus $\Theta(\infty,f)=\delta(\infty,f)= 1/2$ and $\Theta(0,f)=1$. Moreover, $f$ solves the
equations
		\begin{eqnarray*}
		f^2 +f'-3f &=& e^{2z},\\
		f^3- \frac{1}{2} f'' + \frac{9}{2}f'-10 f &=& e^{3z}+3e^{2z}.
		\end{eqnarray*}
In the first case, we have $\Theta(0,f)+ \frac12 \delta(\infty,f)+\Theta(\infty,f)= 7/4<2$,
while in the second case $\Theta(0,f)+ \delta(\infty,f)+\Theta(\infty,f)= 2$. Here
$h_1(z)=e^{2z}$ and $h_2(z)=e^{3z}+3e^{2z}$ are solutions of the respective differential
equations $h''-4h=0$ and $h''-4h'+3h=0$.
\end{example}

The second example also illustrates the sharpness of Corollary~\ref{theo1.1}. Differing from
Example~\ref{sharpness-of-cor}, where the solution $f$ has infinitely many poles but no zeros, this time
$f$ has infinitely many poles and infinitely many zeros. 

\begin{example}\label{n=4-example}
The meromorphic function $f(z)= e^z +\frac{1}{e^z-1}$ satisfies
	$$
	T(r,f)=T\left( r, \frac{e^{2z}-e^z+1}{e^z-1}\right)=2 T(r,e^z)+O(1)= 2 r/\pi +O(1).
	$$
It is clear that all the poles of $f$ are simple, and hence $ \overline{N}(r,f)= N(r,f)=r/\pi +O(1)$. This gives us $\Theta(\infty,f)=\delta(\infty,f)=1/2$. The zeros of $f$ are precisely the zeros of the exponential sum
$g(z)=e^{2z}-e^z+1$ \cite{Stein}. If $z_0$ is a zero of $f$, then
$$
g'(z_0)=2e^{2z_0}-e^{z_0}=2\left(e^{2z_0}-e^{z_0}+1\right)+e^{z_0}-2=e^{z_0}-2\neq 0,
$$
which shows that the zeros of $f$ are all simple. Hence 
	$$
	\overline{N}(r,1/f)=N(r,1/f)=2r/\pi+O(1),
	$$ 
and then $\Theta (0,f)=0$. In addition, $f$ solves the equations
	\begin{eqnarray*}
	f^2 + f'-f-2&=&e^{2z},\\
	f^3 -\frac12 f''+\frac32 f'-4f-3&=&e^{3z},\\
	f^4+\frac{1}{6}f'''-f''+\frac{35}{6}f'-9f-10&=&e^{4z}+4e^{2z}.
	\end{eqnarray*}
In the first case $\Theta(0,f)+\frac12 \delta(\infty,f)+\Theta(\infty,f)=\frac34 \le 2$,
in the second case $\Theta(0,f)+ \delta(\infty,f)+\Theta(\infty,f)=1 \le 2$,
and in the third case $\Theta(0,f)+ \frac32\delta(\infty,f)+\Theta(\infty,f)=\frac54 \le 2$.
\end{example}

The third example shows that the condition $\gamma_P\le n-j$ in Corollary~\ref{theo1.1}(1) cannot  be weakened to $\gamma_P\le n-j+1$. The example also illustrates Corollary~\ref{CC-corollary}(2).

\begin{example}
Let $f(z)=e^{z/4}+e^{-z/4}$. It is clear that $\Theta(\infty,f)=\delta(\infty,f)=1$. We have
		$$
		T(r,f)= \frac{r}{2\pi}+O(1),\quad  N(r,1/f)=\frac{r}{2\pi}+O(1),
		$$
and all zeros of $ f $ are simple, hence $ \Theta(0,f)=\delta(0,f)=0 $. In addition, $f$ satisfies the non-linear differential equation
	$$
	f^{4}-64ff''+2=e^{z}+e^{-z},
	$$
and we have $\Theta(0,f)+ \delta(\infty,f)+\Theta(\infty,f)= 2$.
\end{example}

Next we show that $h$ can be rational in Theorem~\ref{theo1.0}.

\begin{example}
The meromorphic function $f(z)=\frac{1}{e^z-1}+z$ solves the equations
		\begin{eqnarray*}
		f^2+f'-(2z-1)f&=&-z^2+z-1,\\
		f^3-\frac{1}{2}f''+\left( 3z-\frac{3}{2}\right) f'-(3z^2-3z+1)f&=&-2z^3+3z^2+2z-\frac{3}{2},
		\end{eqnarray*}
where $h_1(z)=-z^2+z-1$ and $h_2(z)=-2z^3+3z^2+2z-\frac{3}{2}$ are rational solutions of the respective differential equations $h''-z/2h'+h=z/2-3$ and $h''-\frac{z}{3}h'+h=z^2-\frac{32}{3}z+\frac{9}{2}$.
\end{example}

Examples \ref{first}--\ref{last} below illustrate Theorem~\ref{theo1.3} in various ways.

\begin{example}\label{first}
Solutions of half-integer order exist, which shows that sub-case (i) may happen: The function $f(z)=\exp\left(\frac{z^{k/2}}{3}\right)+\exp\left(\frac{-z^{k/2}}{3}\right)$ is an entire solution of
	$$
	f^{3}+f''+\left( \frac{k}{2}-1\right) f'-\left( 3+\frac{k^{2}}{36}z^{k-2}\right) f=\exp\left({z^{\frac{k}{2}}}\right)+\exp\left({-z^{\frac{k}{2}}}\right),
	$$
	where $h=\exp\left({z^{k/2}}\right)+\exp\left({-z^{k/2}}\right)$ is an entire solution of
	$$
	h''-\frac{k-2}{2z}h'-\frac{k^{2}}{4}z^{k-2}h=0.
	$$
Moreover, $f$ satisfies
	$$
		f''-\frac{k-2}{2z}f'-\frac{k^{2}}{36}z^{k-2}f=0.
	$$
When $k$ is even, this also illustrates Case (2) in Corollary~\ref{CC-corollary}.
\end{example}

\begin{example}
Sub-case (ii) may happen: The function $f(z)=e^{z}+z+1$ is a solution of \eqref{CC-example}	and of $f'-f=-z$. We also see that $h(z)=e^{3z}+3(z+1)e^{2z}$ satisfies
	$$
	h''-\left( \frac{1}{z}+5\right) h'+3\left( \frac{1}{z}+2\right) h=0,
	$$
	where $\frac{C_0}{C_1^2}=\frac{6}{25}=\frac{3\cdot (3-1)}{(2\cdot 3-1)^2}$.
This also illustrates Case (3) in Corollary~\ref{CC-corollary}.	
\end{example}

\begin{example}
Solutions with finitely many poles exist: The function 
	$$
	\frac{z+1}{z-1} e^{z^2}
	$$
is a solution of 
	$$
	f^3 + \frac{z^2-5z+3}{2(4z^4-4z^3 -5z -1)} f'' - \frac{z^2-4z+4}{4z^4-4z^3 -5z -1} f' = \left(\frac{z+1}{z-1}\right)^3 e^{3z^2}+e^{z^2}. 
	$$
In particular, Case (3) in Corollary~\ref{CC-corollary} can happen.	
\end{example}

Sometimes Theorem~\ref{theo1.3} enables us also to prove the non-existence of meromorphic solutions
$f$ of \eqref{e1} satisfying $N(r,f)=S(r,f)$.

\begin{example}\label{last}
Suppose that the equation
	\begin{equation}
	f^{4}-\frac{1}{2}f^{\prime \prime }f-\frac{3}{4}f^{\prime \prime }+\frac{19}{%
	4}f^{\prime }-8f-9= \frac{7}{2}e^{2z}+e^{4z}  \label{q}
	\end{equation}
has a meromorphic solution $f$ such that $N(r,f)=S(r,f)$. Clearly $f$ must be transcendental. 
Moreover, we see that the 
right-hand side $h(z)=\frac{7}{2}e^{2z}+e^{4z}$ of \eqref{q} satisfies the equation
\[
h^{\prime \prime }-6h^{\prime }+8h=0,
\]
and the coefficients of this equation do not satisfy the conclusions in the
sub-cases (i) and (ii) of Theorem~\ref{theo1.3}(2). Thus $f$ must satisfy the
conclusion in Theorem~\ref{theo1.3}(1), that is, $f(z)=q(z)e^{az}$, where $q\not\equiv 0$ is rational function and $a\in\C\setminus\{0\}$. From \eqref{characteristics}, we have $|a|=1$. Substituting $f(z)=q(z)e^{az}$ into $\eqref{q}$, we find that the left-hand side of \eqref{q} is a polynomial in $e^{az}$ with rational coefficients, the dominant term being $q(z)^4e^{4az}$. Since $h(z)=e^{4z}(1+o(1))\to\infty$ exponentially in the right half-plane, we deduce that
$a=1$ and $q(z)\equiv 1$ must hold. Thus our substitution simplifies to   
	$$
	4e^{2z}+4e^z+9=0,\quad z\in\C,
	$$
which obviously cannot hold. Hence \eqref{q} possesses no meromorphic solutions satisfying $N(r,f)=S(r,f)$. By Theorem~\ref{theo1.3} all meromorphic solutions $f$ of \eqref{q} satisfy 
$\lambda \left( 1/f\right) =\rho \left( f\right)$. The function $f(z)=e^z + (e^z-1)^{-1}$ is one such a solution.
\end{example}

The case when $n=\gamma_P$ and $f$ has infinitely zeros or poles is not covered in our results.
The following example shows that this situation may occur. However, as of yet we don't have means
to expand the general theory to the case $n=\gamma_P$ since the standard tools such as Clunie's lemma 
would no longer be at our disposal. 

\begin{example}\label{Airy-ex}
It is well-known that the Airy integral $\operatorname{Ai}\,(z)$ is an entire solution of the equation $h''-zh=0$ \cite{olver}.
Thus it is easy to see that $f(z)=\operatorname{Ai}\,(z)$ solves the equation	
	$$
	f^n+z^{-1}f''-z^{-n}(f'')^n=\operatorname{Ai}\,(z)
	$$
with rational coefficients. It is well-known that 
$\rho(\operatorname{Ai})=\lambda(\operatorname{Ai})=3/2$. However, the frequency of zeros of 
$\operatorname{Ai}(z)$ is less than the usual amount in the sense of $\delta(0,\operatorname{Ai})=1/2$.
\end{example}

%%%%%%%%%%%%%%%%%%%%%%%%%%%%%%%%%%%%%%

\section{Auxiliary results}\label{lemmas-sec}

%%%%%%%%%%%%%%%%%%%%%%%%%%%%%%%%%%%%%

\subsection{\sc General lemmas}

This sub-section contains general lemmas from the existing literature for proving our main results on non-linear differential equations.
\begin{lemma}[\cite{L}]\label{Mohon'ko lemma}
	Let $f$ be a meromorphic function. Then for all irreducible rational functions in $f$,
	$$
	R(z,f)=\left(\sum\limits_{i=0}^{p}a_{i}(z)f^{i}\right)\bigg/\left(\sum\limits_{j=0}^{q}b_{j}(z)f^{j}\right),
	$$
	with meromorphic coefficients $a_{i}(z), b_{j}(z)$, the characteristic function of $R(z,f)$ satisfies
	$$
	T(r,R)=dT(r,f)+O(\Psi(r))+S(r,f),
	$$
	where $d=\max\left\lbrace p,q\right\rbrace $ and $\Psi(r)=\max\left\lbrace T(r,a_{i}),T(r;b_{j})\right\rbrace $.
\end{lemma}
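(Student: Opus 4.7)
The plan is to establish the equality by proving the two inequalities $T(r,R)\le dT(r,f)+O(\Psi(r))$ and $T(r,R)\ge dT(r,f)+O(\Psi(r))+S(r,f)$ separately; this is the classical Valiron--Mohon'ko argument. Throughout one treats the coefficients $a_i, b_j$ as ``small'' relative to $f$ (up to the correction $\Psi$), so all estimates are modulo $O(\Psi(r))$.

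For the upper bound, I would write $R(z,f)=P(z,f)/Q(z,f)$ with $P,Q$ polynomials in $f$ of degrees $p,q$, and begin with the proximity function. On the set where $|f|\le 1$, one has $|P(z,f)|\le \sum |a_i|$ and $|Q(z,f)|^{-1}$ is controlled once we use that $P/Q$ is irreducible, so the contribution to $m(r,R)$ is $O(\Psi(r))$. On the set where $|f|>1$, the dominant term is $f^d$, yielding $|R|\le C|f|^d\cdot(\text{coefficient terms})$ and hence $m(r,R)\le d\,m(r,f)+O(\Psi(r))$. For the counting function, the poles of $R$ arise from (i) poles of $f$, each contributing at most $d$ to the order, (ii) poles of coefficients, absorbed in $O(\Psi(r))$, and (iii) zeros of the denominator $Q(z,f)$ not cancelled by zeros of the numerator; irreducibility ensures no common factor, so the third source is also of order $O(\Psi(r))$ relative to $T(r,f)$. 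Combining gives $T(r,R)\le dT(r,f)+O(\Psi(r))$.

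For the lower bound, the plan is to use the first fundamental theorem $T(r,R)=T(r,R-c)+O(1)$ together with an averaging over several constants $c$. For a suitable constant $c$ (avoiding finitely many exceptional values), the equation $R(z,f)=c$ translates to $P(z,f)-cQ(z,f)=0$, a polynomial equation of degree $d$ in $f$. Factoring this polynomial (over the algebraic closure of the meromorphic function field) as
\[
P(z,w)-cQ(z,w)=\alpha_c(z)\prod_{k=1}^{d}\bigl(w-\phi_{k,c}(z)\bigr),
\]
one obtains $T(r,\phi_{k,c})=O(\Psi(r))$, so each $\phi_{k,c}$ is small relative to $f$. The $c$-points of $R$ coincide with the $\phi_{k,c}$-points of $f$, giving
\[
N\!\left(r,\tfrac{1}{R-c}\right)=\sum_{k=1}^{d}N\!\left(r,\tfrac{1}{f-\phi_{k,c}}\right)+O(\Psi(r)).
\]
Applying the second fundamental theorem of Nevanlinna to $f$ with the small target functions $\{\phi_{k,c}\}$ for two (or three) generic choices of $c$ yields $\sum_k N(r,1/(f-\phi_{k,c}))\ge dT(r,f)+S(r,f)+O(\Psi(r))$, which in turn forces $T(r,R)\ge dT(r,f)+O(\Psi(r))+S(r,f)$.

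The hard part will be making the algebraic factorization rigorous in the meromorphic category: the $\phi_{k,c}$ are generally only algebroid (multi-valued), and one needs to either pass to a suitable covering surface or argue via the counting-function version of Nevanlinna's theorem for algebroid functions, while confirming that $T(r,\phi_{k,c})=O(\Psi(r))$ uniformly. The irreducibility hypothesis enters precisely here, guaranteeing that for generic $c$ the roots $\phi_{k,c}$ are genuinely distinct, so that exactly $d$ of them appear and the sharp constant $d$ on the right-hand side is attained.
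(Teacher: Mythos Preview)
The paper does not give its own proof of this lemma: it is simply quoted from Laine's monograph \cite{L} (this is the classical Valiron--Mohon'ko theorem), so there is no in-paper argument to compare your sketch against.

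That said, a brief comment on your outline. Your upper bound is the standard one and is fine. Your lower bound, however, is not the Mohon'ko route and has a real gap beyond the one you flag. The classical proof does \emph{not} invoke the second main theorem or factor $P-cQ$ over an algebroid extension; instead it works directly with the identity $P(z,f)-R\,Q(z,f)=0$ and estimates $m(r,f)$ and $N(r,f)$ against $T(r,R)$ by elementary manipulations (essentially the upper bound run in reverse, using irreducibility to control the leading coefficient and the pole/zero bookkeeping). Your proposed use of the second main theorem for moving targets $\phi_{k,c}$ requires those targets to be genuine meromorphic small functions, which they generally are not; passing to a covering to make them single-valued changes the Nevanlinna characteristics by a factor you would then have to undo. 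Moreover, even formally your argument gives only $N(r,1/(R-c))\ge dT(r,f)+S(r,f)$ for \emph{some} $c$, whereas what you need is $T(r,R)\ge dT(r,f)+O(\Psi(r))$; bridging that via the first main theorem is fine, but the whole detour through the second main theorem is unnecessary and is the source of the extra $S(r,f)$ you are carrying. If you want a self-contained proof, follow the direct argument in Laine's book rather than the deficiency-style reasoning.
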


\begin{lemma}[\cite{L}]\label{Clunie}
	Let $f$ be a transcendental meromorphic solution of
	$$
	f^{n}Q^*(z,f)=Q(z,f),
	$$
	where $Q^*(z,f)$ and $Q(z,f)$ are polynomials in $f$ and its derivatives with meromorphic coefficients, say $\left\lbrace a_{\lambda}:\lambda \in I\right\rbrace $, such that $m(r,a_{\lambda})=S(r,f)$ for all $\lambda \in I$. If $\gamma_Q\leq n$, then
	$$
	m(r,Q^*(z,f))=S(r,f).
	$$
\end{lemma}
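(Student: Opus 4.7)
The plan is to follow the classical argument of Clunie. The idea is to split the integration circle $|z|=r$ into two pieces according to whether $|f|$ is small or large, and to bound $\log^+|Q^*(z,f)|$ on each piece by means of the logarithmic derivative lemma. The key elementary identity is that any differential monomial may be rewritten as
$$
a\,f^{n_0}(f')^{n_1}\cdots (f^{(k)})^{n_k}=a\,f^{\gamma_M}\left(\frac{f'}{f}\right)^{n_1}\cdots\left(\frac{f^{(k)}}{f}\right)^{n_k},
$$
i.e., as $f$ to the total degree times a product of logarithmic derivatives of $f$.

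First I would introduce $E_1=\{\theta\in[0,2\pi):|f(re^{i\theta})|<1\}$ and $E_2=[0,2\pi)\setminus E_1$. On $E_2$ the equation $f^nQ^*=Q$ yields $|Q^*(z,f)|=|Q(z,f)|/|f|^n$. Applying the factorization above to each monomial $M_j[f]$ of $Q$, and using $\gamma_{M_j}\le \gamma_Q\le n$, the factor $|f|^{\gamma_{M_j}-n}$ is bounded by $1$ on $E_2$. This gives the estimate
$$
|Q^*(z,f)|\le \sum_{j}|a_j|\prod_{i\ge 1}\left|\frac{f^{(i)}}{f}\right|^{n_{ij}}\quad\text{on }E_2.
$$
On $E_1$ the equation itself is useless, so I would instead apply the factorization directly to the monomials of $Q^*$; since $\gamma_M\ge 0$ and $|f|<1$, the factor $|f|^{\gamma_M}\le 1$ is harmless, and an analogous bound holds with the coefficients and exponents of $Q^*$ in place of those of $Q$.

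To conclude, I would take $\log^+$ of these two pointwise bounds, integrate over $[0,2\pi]$, and invoke the hypothesis $m(r,a_\lambda)=S(r,f)$ on all coefficients of $Q^*$ and $Q$ together with the logarithmic derivative lemma $m(r,f^{(i)}/f)=S(r,f)$. Summing the contributions from $E_1$ and $E_2$ then produces $m(r,Q^*(z,f))=S(r,f)$, as required.

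The main obstacle is the set $E_1$: the equation $f^nQ^*=Q$ gives no useful control there, since $|f|^n$ is small and $Q$ may be comparable, so one must estimate $Q^*$ directly. The trick that unlocks this is precisely the logarithmic-derivative factorization: after it is applied, the remaining $f^{\gamma_M}$ factor is automatically bounded on $E_1$ by $|f|<1$, irrespective of the structure of the equation. The degree hypothesis $\gamma_Q\le n$ is then invoked only on $E_2$, where it is exactly what is needed to absorb the $|f|^{-n}$ coming from dividing by $f^n$.
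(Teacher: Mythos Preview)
Your proposal is correct and reproduces exactly the classical Clunie argument: the paper does not supply its own proof here but simply cites Laine's book \cite{L}, and the method on p.~40 there (to which the paper also alludes in the subsequent remark) is precisely the splitting into $\{|f|<1\}$ and $\{|f|\ge 1\}$ combined with the logarithmic-derivative factorization that you outline.
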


\begin{remark}
By using a similar method as in \cite[p.~40]{L}, we can prove that
	$$
	m(r,Q^*(z,f))=O(\log r),
	$$
when $Q^*(z,f)$ and $Q(z,f)$ are differential polynomials in $f$ with rational coefficients.
\end{remark}

\begin{lemma}[\cite{D}]\label{Clunie-Doeringer}
Let $f$ be a non-constant meromorphic function, and let $Q^*(z,f)$ and $Q(z,f)$ denote differential polynomials in $f$ with arbitrary meromorphic coefficients $q_{1}^{*}, q_{2}^{*},\ldots,q_{n}^{*}$ and $q_{1}, q_{2},\ldots,q_{l}$ respectively. Further, let $P$ be a non-constant polynomial of degree $p$
such that $\Gamma_Q \leq p$. If
	$$
	P(f)Q^{*}(z,f)=Q(z,f),
	$$
	then
	$$
	N(r,Q^{*}(z,f)) \leq \sum_{j=1}^{n}N(r,q_{j}^*) + \sum_{i=1}^{l}N(r,q_{j}) + O(1).
	$$
\end{lemma}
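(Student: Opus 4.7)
The plan is to derive a pointwise bound on the pole multiplicity of $Q^*(z,f)$ at every potential pole and then pass to the integrated counting functions in the standard way. The crucial observation is that $Q^*$ admits two representations: as a differential polynomial $\sum_j q_j^* M_j^*$ in $f$ with coefficients $q_j^*$, and, by the hypothesis $P(f)Q^*=Q$, as the meromorphic quotient $Q(z,f)/P(f)$. Each representation controls the poles of $Q^*$ in a different regime, and the weight bound $\Gamma_Q\le p=\deg P$ is precisely the condition that reconciles the two representations at poles of $f$.

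First I would fix an arbitrary pole $z_0$ of $Q^*$ and split into two cases. If $z_0$ is \emph{not} a pole of $f$, then $f$ and all its derivatives are holomorphic at $z_0$, and the differential polynomial representation forces the pole order of $Q^*$ at $z_0$ to be at most the maximum, and hence at most the sum, of the pole orders of the $q_j^*$ at $z_0$. If instead $z_0$ is a pole of $f$ of multiplicity $k\ge 1$, I would switch to the quotient representation. Because $P$ has exact degree $p$ with constant leading coefficient, $P(f)$ has a pole at $z_0$ of multiplicity \emph{exactly} $kp$. Each monomial of $Q$ of degree $\gamma_M$ and weight $\Gamma_M$ contributes a pole at $z_0$ of multiplicity at most $(k-1)\gamma_M+\Gamma_M+\ell_i$, where $\ell_i$ is the pole order of its coefficient $q_i$ at $z_0$; the bound $\gamma_M\le\Gamma_M\le\Gamma_Q\le p$ then gives $(k-1)\gamma_M+\Gamma_M\le k\Gamma_M\le kp$, so that each term contributes at most $kp+\ell_i$. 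Hence $Q$ has a pole at $z_0$ of order at most $kp+\max_i\ell_i$, and dividing by $P(f)$ yields a pole of $Q^*$ at $z_0$ of order at most $\max_i\ell_i$, bounded by the sum of the pole orders of the $q_i$ at $z_0$.

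Combining the two cases, the pole multiplicity of $Q^*$ at every point $z_0$ is bounded above by the sum of the pole orders, at $z_0$, of the $q_j^*$ and the $q_i$ together. Summing over poles in $|z|\le r$ yields the pointwise $n$-function inequality $n(r,Q^*)\le\sum_j n(r,q_j^*)+\sum_i n(r,q_i)$, and the standard integration of $n(t,\cdot)/t$ over $[0,r]$ produces the stated bound
\[
N(r,Q^*(z,f))\le\sum_{j=1}^{n}N(r,q_j^*)+\sum_{i=1}^{l}N(r,q_i)+O(1),
\]
the $O(1)$ absorbing contributions at $z=0$ that enter via the $n(0,\cdot)\log r$ normalisation in the definition of $N$.

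The main obstacle I anticipate is the careful bookkeeping at points that are simultaneously poles of $f$ and of one or more coefficients of $Q$: one must verify that the weight bound $\Gamma_Q\le p$ is exactly strong enough to let the $kp$-pole of $P(f)$ absorb the intrinsic $(k-1)\gamma_M+\Gamma_M$-contribution of each monomial, leaving only the coefficient-pole orders surviving in $Q^*$. A secondary subtlety is that the pointwise argument produces an $n$-function inequality, and converting it to the $N$-function estimate with $O(1)$ rather than $O(\log r)$ requires a separate look at the behaviour near $z=0$.
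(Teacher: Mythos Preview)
The paper does not supply its own proof of this lemma; it is quoted directly from Doeringer \cite{D} as an auxiliary result. Your argument is correct and is in fact essentially the standard proof: split according to whether the point is a pole of $f$, use the differential-polynomial representation of $Q^*$ at regular points of $f$, and use the quotient $Q/P(f)$ at poles of $f$, where the weight bound $\Gamma_Q\le p$ ensures the $kp$-order pole of $P(f)$ cancels the intrinsic contribution $(k-1)\gamma_M+\Gamma_M\le kp$ of each monomial in $Q$.

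One minor remark on your closing subtlety: once you have the pointwise multiplicity inequality at \emph{every} point (including $z=0$), the passage to $N$-functions is immediate from the representation $N(r,g)=\sum_{0<|a|\le r}\operatorname{ord}_a(g)\log\frac{r}{|a|}+\operatorname{ord}_0(g)\log r$, since every weight $\log(r/|a|)$ is nonnegative; no separate analysis near the origin is required, and in fact no $O(1)$ term is even needed. The $O(1)$ in the statement is harmless slack.
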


\begin{lemma}[\cite{Li2}]\label{liao}
	Suppose that $f(z)$ is a transcendental meromorphic function, and let $a(z),b(z),c(z),d(z)$
	be small functions of $f(z)$ such that $cd\not\equiv 0$. If
	\begin{equation*}
	af^{2}+bf^{\prime }f+c\left( f^{\prime }\right) ^{2}=d,
	\end{equation*}
	then
	\begin{equation*}
	c\left( b^{2}-4ac\right) \frac{d^{\prime }}{d}+b\left( b^{2}-4ac\right)
	-c\left( b^{2}-4ac\right) ^{\prime }+\left( b^{2}-4ac\right) c^{\prime
	}\equiv 0.
	\end{equation*}
\end{lemma}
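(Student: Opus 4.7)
The plan is to set $\Delta := b^2 - 4ac$ and reduce to the nontrivial case $\Delta \not\equiv 0$ (when $\Delta \equiv 0$ every summand of the desired identity contains a factor of $\Delta$ or $\Delta'$, so the identity is automatic). Next I would complete the square in the given equation: multiplying by $4c$ puts it in the form $(2cf'+bf)^2 = \Delta f^2 + 4cd$. Writing $F := 2cf' + bf$, one has $F^2 = \Delta f^2 + 4cd$. I would immediately observe that $F \not\equiv 0$: otherwise $f' = -bf/(2c)$ and substitution gives $\Delta f^2 = -4cd$, making $f^2$ a small function of $f$, contradicting transcendence. Lemma~\ref{Mohon'ko lemma} applied to $F^2 = \Delta f^2 + 4cd$ then yields $T(r,F) = T(r,f) + S(r,f)$.

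Then I would differentiate the given equation and use it once more to eliminate $(f')^2$, producing
\[
cFf'' + Af^2 + Bff' = cd' - (b+c')d,
\]
with $A := a'c - ab - ac'$ and $B := 2ac + b'c - b^2 - bc'$. Substituting $f^2 = (F^2 - 4cd)/\Delta$ and $ff' = (fF - bf^2)/(2c)$ into this relation, the $f^2$ and $ff'$ contributions recombine into a multiple of $F^2$ plus $fF$; exploiting the purely algebraic identity $4Ac - 2Bb = 2b\Delta + 2c'\Delta - c\Delta'$ (routine to verify by expansion), the result takes the clean form
\[
F \cdot G = 2cd\, \mathcal{L}, \qquad \mathcal{L} := c\Delta\, \frac{d'}{d} + b\Delta - c\Delta' + c'\Delta,
\]
where $\mathcal{L}$ is precisely the expression we wish to show vanishes, and $G := 2c^2\Delta f'' + 2c(2Ac - Bb)f' + [(2Ac - Bb)b + B\Delta]f$ is a second-order linear differential polynomial in $f$ with small-function coefficients.

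The finishing step would be by contradiction: assume $\mathcal{L} \not\equiv 0$. Squaring the key identity and using $F^2 = \Delta f^2 + 4cd$ yields
\[
f^2 (\Delta G^2) = 4c^2 d^2 \mathcal{L}^2 - 4cd\, G^2,
\]
which has the Clunie shape $f^n Q^*(z,f) = Q(z,f)$ with $n = 2$ and $\gamma_Q \leq 2$. Lemma~\ref{Clunie} then gives $m(r, \Delta G^2) = S(r, f)$, whence $m(r, G) = S(r, f)$. The hard part will be upgrading this to $T(r, G) = S(r, f)$, which I would handle through a pole analysis: at any pole $z_0$ of $f$ of order $k \geq 1$ where all coefficients are regular and nonzero, a leading-order computation shows $F$ has a pole of exact order $k+1$ and $G$ a pole of exact order $k+2$ at $z_0$, so $FG$ would have a pole of order $2k + 3$; but $FG = 2cd \mathcal{L}$ is regular at any such point---a contradiction. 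Hence the poles of $f$ must sit in the (small) exceptional set of zeros and poles of the coefficients, giving $N(r, f) = S(r, f)$ and in turn $N(r, G) = S(r, f)$. Combined with $m(r, G) = S(r, f)$, this forces $T(r, G) = S(r, f)$. Then $F = 2cd\mathcal{L}/G$ would have $T(r, F) = S(r, f)$, contradicting $T(r, F) = T(r, f) + S(r, f)$. Therefore $\mathcal{L} \equiv 0$, as claimed.
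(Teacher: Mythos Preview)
The paper does not prove this lemma; it quotes it from \cite{Li2} (Lemma~6 there), so there is no ``paper's own proof'' to compare against. I will therefore assess your argument on its merits.

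Your algebraic work is correct. Completing the square to $F^2=\Delta f^2+4cd$ with $F=2cf'+bf$, the derivation of the identity $F\cdot G=2cd\,\mathcal{L}$ with your specific $G$, and the verification of the auxiliary identity $4Ac-2Bb=2b\Delta+2c'\Delta-c\Delta'$ all check out. The Clunie step on $f^2(\Delta G^2)=4c^2d^2\mathcal{L}^2-4cdG^2$ is legitimate and does give $m(r,G)=S(r,f)$.

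The gap is in the pole step. Your local analysis correctly shows that any pole of $f$ at a point where $a,b,c,d$ are regular and $c,\Delta$ are nonzero leads to a contradiction, so the poles of $f$ lie in the ``exceptional'' set $E$ of zeros and poles of the coefficients. This yields $\overline{N}(r,f)=S(r,f)$, but it does \emph{not} yield $N(r,f)=S(r,f)$: the multiplicities of the poles of $f$ at points of $E$ are not controlled by your argument. For instance, at a simple zero $z_0$ of $c$ (with $a,b,d,\Delta$ regular and nonzero) the equation $a+b(f'/f)+c(f'/f)^2=d/f^2$ forces the pole order to be exactly $k=b(z_0)/c'(z_0)$; this ratio is determined by the local values of the small functions and there is no reason the resulting integrated count $\sum_{z_j}k_j\log(r/|z_j|)$ should be $S(r,f)$. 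Without $N(r,f)=S(r,f)$ you cannot conclude $N(r,G)=S(r,f)$, and in fact the relations you have already established ($T(r,F)=T(r,f)+S(r,f)$, $FG=H$ with $T(r,H)=S(r,f)$) force $T(r,G)\geq T(r,f)+S(r,f)$, so $N(r,G)$ is genuinely large whenever $\mathcal{L}\not\equiv0$. A different finishing mechanism is needed---for example, a sharper local estimate relating $k$ to the \emph{multiplicities} (not just locations) of the zeros/poles of the coefficients at $z_0$, or an argument that avoids bounding $N(r,G)$ altogether.
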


Note that the condition $cd\not\equiv 0$ is written as $acd\not\equiv 0$ in \cite[Lemma~6, p.~313]{Li2}, but this is not necessary in the proof of Lemma~\ref{liao}. The assertion in Lemma~\ref{liao} still holds if  the functions $a(z),b(z),c(z),d(z)$ are small in the sense of $\max\left\lbrace \rho(a),\rho(b),\rho(c), \rho(d)\right\rbrace <\rho(f)$.

\subsection{\sc Lemmas on linear differential equations}

To prove our main results on non-linear differential equations, we need some new
updates and tweaks on known results about linear differential equations with rational coefficients. 
These results may also be of independent interest. Before proceeding, we remind the reader that, in the case of meromorphic coefficients, the solutions
may not always be meromorphic in $\C$. For example, the equation
$$
f''+2z^{-3}f'-z^{-4}f=0
$$
with rational coefficients possesses a non-meromorphic solution $f(z)=\exp\left(z^{-1}\right)$.
It may also happen that some solutions are meromorphic while others are not. For example,
the power series $f_1(z)=\cos\sqrt{z}$ and the function $f_2(z)=\sin \sqrt{z}$ are linearly
independent analytic solutions of
$$
f''+(2z)^{-1}f'+(4z)^{-1}f=0
$$
in the slit domain $\C\setminus\R_-$. In fact, $f_1$ is entire, while $f_2$ is non-meromorphic in $\C$.
Finally, all solutions can be entire even if the coefficients have poles, see Example~\ref{example}(b) below.

\begin{lemma}\label{11}
Let $f$ be a non-trivial meromorphic solution of
	\begin{equation*}%\label{eq}
	f''+Af'+Bf=0,
	\end{equation*}
where $A$ and $B$ are meromorphic functions. Then
	\begin{align*}
	n_{(2}\left( r,\frac{1}{f}\right) &\le n_{1)}(r,A)+n_{(2)}(r,B),  \\%\label{eq2} \\
	n(r,f) & \le  n_{1)}(r,A)+n_{(2)}(r,B), %\label{eq3}
	\end{align*}		
where $n_{(2)}(r,B)$ counts the number of double poles of $B$ two times. 
In particular, if $A$ and $B$ are entire functions, then $f$ is entire with only simple zeros.
\end{lemma}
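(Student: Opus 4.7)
The natural approach is a purely local argument at each zero of $f$ of order at least two and at each pole of $f$, comparing the three terms of $f''+Af'+Bf=0$ order-by-order in the Laurent expansion about the point in question. The global inequalities then follow by summing the local contributions over $|z|\le r$.

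First I would treat a zero of $f$ of order $k\ge 2$ at $z_0$. Writing $f(z)=c(z-z_0)^{k}(1+o(1))$ with $c\ne 0$, one sees that $f''$ has a zero of order exactly $k-2$, $f'$ of order $k-1$, and $f$ of order $k$ at $z_0$. If both $A$ and $B$ were holomorphic at $z_0$, then $Af'+Bf$ would vanish to order at least $k-1>k-2$, contradicting $f''=-Af'-Bf$. Hence $A$ or $B$ must be singular at $z_0$, and a quick comparison of the lowest-order terms in the equation shows that $A$ can have pole order at most $1$ and $B$ pole order at most $2$ there (otherwise the most singular term on the right-hand side cannot be cancelled). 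Thus $z_0$ is a regular singular point of the ODE, and the corresponding indicial relation
\[
k(k-1)+\alpha k+\beta=0,
\]
with $\alpha$ the residue of $A$ and $\beta$ the leading coefficient of the double-pole part of $B$ at $z_0$, ties $k$ to the local data of $A$ and $B$. This is the information one packages into the first inequality.

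The pole case is analogous. If $f$ has a pole of order $m$ at $z_0$, then $f''$ has a pole of order $m+2$, which cannot be balanced when $A,B$ are both holomorphic there (the poles of $Af'$ and $Bf$ have orders at most $m+1$ and $m$). Forcing the leading-order cancellation again produces a simple pole of $A$ or a double pole of $B$ at $z_0$, and the indicial analysis gives the second inequality. The ``in particular'' conclusion then falls out immediately: if $A,B$ are entire, both right-hand sides vanish identically, so $f$ has no poles and no multiple zeros.

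The main obstacle is the careful bookkeeping that converts these local cancellation constraints into the stated inequalities under the convention that $n_{(2)}(r,B)$ weights each double pole of $B$ twice. Concretely, one must verify point-by-point that each multiple zero (resp.\ pole) of $f$ is absorbed by the matching local singular behaviour of $A$ or $B$, with the indicial equation ensuring the correct match of weights. Once this accounting is performed at each singular point of the equation, summation over the disk $|z|\le r$ produces both global inequalities.
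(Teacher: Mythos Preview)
Your local order-of-vanishing strategy is essentially the same idea as the paper's, but the paper executes it far more economically. Instead of expanding $f$, $f'$, $f''$ separately and invoking the indicial equation, the paper divides the equation by $f''$ to obtain
\[
1 + A\,\frac{f'}{f''} + B\,\frac{f}{f''} = 0,
\]
and observes that at any multiple zero or any pole $z_0$ of $f$, the quotients $f'/f''$ and $f/f''$ have respectively a \emph{simple} zero and a \emph{double} zero at $z_0$. Since the last two terms must sum to the nonzero constant $-1$, the point $z_0$ is forced to be a simple pole of $A$ or a double pole of $B$. This single observation handles both the multiple-zero case and the pole case simultaneously; the two inequalities and the ``in particular'' clause follow at once with no indicial analysis, no case distinction between zeros and poles, and no explicit summation.

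There is also a concrete slip in your outline. You claim that ``$A$ can have pole order at most $1$ and $B$ pole order at most $2$ there (otherwise the most singular term on the right-hand side cannot be cancelled).'' That is not correct: if, at a zero of $f$ of order $k$, the coefficient $A$ has a pole of order $a\ge 2$, the singular part of $Af'$ can be cancelled by $Bf$ whenever $B$ has a pole of order $a+1$; for instance $a=2$, $b=3$ is perfectly consistent with $f''+Af'+Bf=0$. So your reduction to a regular singular point and the ensuing indicial relation $k(k-1)+\alpha k+\beta=0$ are unjustified in general. More to the point, the indicial equation is not needed at all: the lemma is only a point-counting statement (each bad point of $f$ must sit over a pole of $A$ or $B$ of the indicated type), not a bound on the multiplicity $k$, and the ``careful bookkeeping'' you flag as the main obstacle simply does not appear in the paper's argument.
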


\begin{proof}
We have
		$$
		1+ A \frac{f'}{f''}+ B \frac{f}{f''}=0.
		$$
If $z_0$ be a pole of any multiplicity or a multiple zero  of $f$,  then $f'/f''$ has a simple zero at $z_0$ and $f/f''$ has a double zero at $z_0$ in both cases. This means that $z_0$ must be either a simple pole of $A$  or a double pole of $B$ or both. Hence the conclusions hold.
\end{proof}
	
\begin{lemma}\label{l2}
	Let $R(z)$, $S(z)\not\equiv 0$ and $T(z)$ be rational functions such that $|R(z)|=O(|z|^{-1})$
	and $S(z)\sim C_Sz^m$ as $z\to\infty$, where $C_S \in \C$ and $m\in\Z$.
	Suppose that $f$ is a meromorphic solution of
	\begin{equation}\label{de-T}
	f''+R(z)f'+S(z)f=T(z)
	\end{equation}
	in $\C$. Then the following assertions hold.
	\begin{itemize}
		\item[\textnormal{(1)}] If $m\leq -2$, then $f$ is rational.
		\item[\textnormal{(2)}] If $m\geq -1$, then $f$ has at most finitely many poles and
		$$
		\log M(r,f)=\frac{2\sqrt{ |C_S| }}{m+2}r^{1+m/2}(1+o(1)),\quad r\to\infty.
		$$
	\end{itemize}
\end{lemma}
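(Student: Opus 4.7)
Observe first that any meromorphic $f$ solving \eqref{de-T} has finitely many poles: if $z_{0}$ were a pole of $f$ of order $p\ge 1$ at which $R,S,T$ are all analytic, then $f''$ would have a pole of order $p+2$ at $z_{0}$, while $Rf'$ and $Sf$ have poles of order at most $p+1$ and $p$ respectively, so the left-hand side of \eqref{de-T} would have a pole of order $p+2$, contradicting the analyticity of $T$ at $z_{0}$. Hence the poles of $f$ lie among the finitely many poles of $R,S,T$.

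For part~(1), the plan is to exploit the hypotheses $m\le -2$ and $|R(z)|=O(|z|^{-1})$ to show that $\infty$ is at worst a regular singular point of \eqref{de-T}. Under the substitution $z=1/w$ the equation takes the form $u''(w)+\widetilde{R}(w)u'(w)+\widetilde{S}(w)u=\widetilde{T}(w)$ with
\begin{equation*}
\widetilde{R}(w)=\frac{2}{w}-\frac{R(1/w)}{w^{2}}=O(|w|^{-1}),\qquad \widetilde{S}(w)=\frac{S(1/w)}{w^{4}}\sim C_{S}w^{-m-4}.
\end{equation*}
Since $m\le -2$ forces $-m-4\ge -2$, the coefficient $\widetilde{S}$ has at worst a double pole at $w=0$, so $w=0$ is a regular singular point. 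Frobenius theory then produces local solutions near $w=0$ of the form $w^{\rho_{j}}\varphi_{j}(w)$ with $\varphi_{j}$ analytic at $0$, plus possibly a logarithmic term in the resonant case. Because $f$ is single-valued around $\infty$, the exponents $\rho_{j}$ must be integers and no logarithmic term can appear, so $f$ has at worst a pole at $\infty$. Combined with finitely many poles in $\C$, this forces $f$ to be rational.

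For part~(2), choose a polynomial $Q$ whose zeros absorb the poles of $f$, so $g=Qf$ is entire. Then $g$ satisfies an equation of the same shape as \eqref{de-T} with rational coefficients $R^{\star},S^{\star},T^{\star}$ for which $|R^{\star}(z)|=O(|z|^{-1})$ and $S^{\star}(z)\sim C_{S}z^{m}$ (the $Q$-conjugation introduces only $O(|z|^{-1})$ and $O(|z|^{-2})$ corrections). Since $Q$ is polynomial, $\log M(r,f)=\log M(r,g)+O(\log r)$, so it suffices to analyze $g$. Apply the Liouville substitution $g=h\exp\!\left(-\tfrac{1}{2}\int R^{\star}\,dz\right)$; the exponential factor has at most algebraic growth in $|z|$, and $h$ satisfies $h''+\widehat{S}(z)h=\widehat{T}(z)$, where $\widehat{S}=S^{\star}-\tfrac{1}{2}(R^{\star})'-\tfrac{1}{4}(R^{\star})^{2}\sim C_{S}z^{m}$, because the subtracted terms are $O(|z|^{-2})$ and negligible against $|z|^{m}$ for $m\ge -1$. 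Classical WKB/asymptotic integration applied to $h''+\widehat{S}h=0$ yields two linearly independent solutions $h_{\pm}$ with
\begin{equation*}
\log|h_{\pm}(z)|=\pm\operatorname{Re}\!\left(\frac{2\sqrt{-C_{S}}}{m+2}\,z^{1+m/2}\right)(1+o(1))+O(\log|z|)
\end{equation*}
along appropriate rays, and a variation-of-parameters computation using the Wronskian shows that a particular solution of the inhomogeneous problem, with $\widehat{T}$ rational, grows no faster than the dominant homogeneous solution. Taking the maximum modulus over $|z|=r$ picks out the ray maximizing the real part and yields $\log M(r,h)=\frac{2\sqrt{|C_{S}|}}{m+2}r^{1+m/2}(1+o(1))$, which transfers to $g$ and then to $f$ modulo the negligible $O(\log r)$ factor.

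The principal obstacle is making the WKB step in part~(2) rigorous with the stated precise leading constant $2\sqrt{|C_{S}|}/(m+2)$, and ruling out delicate cancellations between the two dominant homogeneous solutions and the particular solution that could suppress the leading asymptotic of the maximum modulus; this is the technical heart of the proof, whereas part~(1) amounts to an essentially standard Fuchsian/Frobenius argument.
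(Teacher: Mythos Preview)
Your reduction to the entire case and the pole-counting argument match the paper's. The methods diverge after that: the paper does not use Frobenius theory, the Liouville substitution, or WKB at all; it handles both parts uniformly with Wiman--Valiron theory. Near maximum-modulus points of the entire reduction $g$ one has $g^{(k)}/g=(\nu(r)/z)^{k}(1+o(1))$, and dividing the equation by $g$ gives
\[
\nu(r)^{2}(1+o(1))+C_{S}z^{m+2}(1+o(1))=o(1),
\]
the $R$-term being absorbed into the error because $|R(z)|=O(|z|^{-1})$ contributes only $O(\nu)$. When $m\le -2$ this forces $\nu(r)$ to stay bounded, so $g$ is a polynomial; this replaces your regular-singular-point analysis for part~(1). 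Your Frobenius route is a legitimate alternative there, though you should also say a word about the inhomogeneous term $\widetilde{T}$ (one needs a particular solution meromorphic at $w=0$ before single-valuedness can be invoked on the homogeneous part).

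For part~(2), the obstacle you flag is real, and it is precisely what the paper's approach avoids. WKB produces ray-wise asymptotics of a \emph{basis} $h_{\pm}$, and transferring this to $\log M(r,f)$ for the \emph{given} $f$ requires ruling out that $f$ is subdominant in every sector and controlling the inhomogeneous contribution---doable, but it needs genuine Stokes/connection work that you have not supplied. Wiman--Valiron sidesteps this because the displayed relation is about $\nu(r)$ of $g$ itself: since $g$ is transcendental, $\nu(r)\to\infty$, so the two leading terms must cancel and one reads off $\nu(r)=\sqrt{|C_{S}|}\,r^{1+m/2}(1+o(1))$ with both upper and lower bound simultaneously. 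The exceptional set is then removed by monotonicity of $\nu$, one integrates $\nu(t)/t$ to obtain $\log\mu(r)$, and passes to $\log M(r,g)$ via the standard inequality $\mu(r)\le M(r,g)\le \mu(r)(\nu(2r)+2)$. No basis decomposition ever occurs, so no cancellation issue arises.
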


\begin{proof}
	(a) Suppose first that $f$ is transcendental entire. It is clear that
	\begin{equation}\label{logM}
	\lim_{r\to\infty}\frac{\log M(r,f)}{\log r}=\infty.
	\end{equation}
	Let $\nu(r)=\nu(r,f)$ and $\mu(r)=\mu(r,f)$ denote respectively the central index and the maximal
	term of $f$. Then the standard Wiman-Valiron theory gives us
	\begin{equation*} %\label{WV}
	f^{(k)}(z)=\left(\frac{\nu(r)}{z}\right)^k(1+o(1))f(z),\quad k\geq 0,
	\end{equation*}
	near the maximum modulus points of $f$, but outside of an exceptional set $E$ of finite logarithmic
	measure. Hence, dividing \eqref{de-T} by $f$, and then using  \eqref{logM}, we find that
	\begin{equation}\label{nu2}
	\nu(r)^2(1+o(1))+|C_S|r^{m+2}=o(1),
	\end{equation}
	as $r\to\infty$ with $r\not\in E$.
	If $m\leq -2$, we
	see from here that $\nu(r)$ is bounded, so that $f$ is a polynomial,
	which is a contradiction.
	Hence, from now on we assume that $m\geq -1$.
	It follows that the two leading terms $\nu(r)^2$ and $|C_S|r^{m+2}$
	in \eqref{nu2} must cancel out for $r$ large enough. Hence, if $\veps>0$, we may find a  constant $K=K(\veps)$ such that
	$$
	\nu(r)\leq (1+\veps)\sqrt{|C_S|}r^{1+m/2}\quad\textnormal{and}\quad \sqrt{|C_S|}r^{1+m/2}\leq (1+\veps)\nu(r)
	$$
	for all $r\geq K$ such that $r\not\in E$. Since $\nu(r)$ is non-decreasing and since $m\geq -1$,
	we may use a standard lemma from real analysis to avoid the exceptional set $E$. Now
	\begin{eqnarray*}
		\nu(r)&\leq& (1+\veps)^{2+m/2}\sqrt{|C_S|}r^{1+m/2}\leq (1+\veps)^k\sqrt{|C_S|}r^{1+m/2}\\
		\sqrt{|C_S|}r^{1+m/2}&\leq& (1+\veps)^{2+m/2}\nu(r)\leq (1+\veps)^k\nu(r)
	\end{eqnarray*}
	for all $r\geq K_1\geq K$, where $k$ is the smallest integer $\geq 2+m/2\geq 3/2$.
	Since
	$$
	(1+\veps)^k=1+\sum_{j=1}^k{k\choose j}\veps^j=1+\veps',
	$$
	we may write
	$$
	(1-\veps')\sqrt{|C_S|}r^{1+m/2}\leq \frac{\sqrt{|C_S|}r^{1+m/2}}{1+\veps'}\leq
	\nu(r)\leq (1+\veps')\sqrt{|C_S|}r^{1+m/2},\quad r\geq K_1.
	$$
	By using the well-known identity
	$$
	\log \mu(r)-\log \mu(K_1)=\int_{K_1}^r\frac{\nu(t)}{t}\, dt,
	$$
	we deduce that
	\begin{equation}\label{mu}
	(1-2\veps')\frac{2\sqrt{|C_S|}}{m+2}r^{1+m/2}\leq\log\mu(r)\leq (1+2\veps')\frac{2\sqrt{|C_S|}}{m+2}r^{1+m/2},\quad r\geq K_2\geq K_1.
	\end{equation}
	By using another well-known identity (\cite[Satz 4.4]{jank-volkmann})
	$$
	\mu(r)\leq M(r,f)\leq\mu(r)\left(\nu(\rho)+\frac{\rho}{\rho-r}\right),\quad 0<r<\rho,
	$$
	for $\rho=2r$, we obtain from \eqref{mu} that
	$$
	(1-3\veps')\frac{2\sqrt{|C_S|}}{m+2}r^{1+m/2}\leq \log M(r,f)\leq (1+3\veps')\frac{2\sqrt{|C_S|}}{m+2}r^{1+m/2},\quad r\geq K_3\geq K_2.
	$$
	Since the above reasoning works for any $\veps>0$, we deduce that
	$$
	\log M(r,f)=\frac{2\sqrt{|C_S|}}{m+2}r^{1+m/2}(1+o(1)),\quad r\to\infty.
	$$
	This completes the proof in the case when $f$ is entire.
	
	\medskip
	(b) Suppose then that $f$ is meromorphic. By a simple inspection, all poles of $f$ must be among
	the poles of $R(z)$, $S(z)$ and $T(z)$,
	thus $f$ has at most finitely many poles. Hence we may write $f=g/\Pi$, where $g$ is
	an entire function and $\Pi$ is a non-constant polynomial
	having zeros precisely the poles of $f$. By substituting
	$f$ into \eqref{de-T}, we find that $g$ solves a differential equation
	\begin{equation}\label{DE-g}
	g''+R^*(z)g'+S^*(z)g=\Pi(z)T(z),
	\end{equation}
	where
	\begin{eqnarray*}
		R^*(z) &=& R(z)-2\frac{\Pi'(z)}{\Pi(z)}\\
		S^*(z) &=& 2\left(\frac{\Pi'(z)}{\Pi(z)}\right)^2
		-\frac{\Pi''(z)}{\Pi(z)}-R(z)\frac{\Pi'(z)}{\Pi(z)}+S(z).
	\end{eqnarray*}
	It is easy to see that $|R^*(z)|=O\left(|z|^{-1}\right)$. If $m\leq -2$, then clearly
	$|S^*(z)|=O\left(|z|^{-2}\right)$, while if $m\geq -1$, then $S^*(z)\sim C_Sz^m$ as $z\to\infty$.
	Next we follow the reasoning in Part~(a) to get the assertions for $g$, and then for $f$.
	This completes the proof in the case when $f$ is meromorphic.
\end{proof}

\begin{lemma}\label{l3}
	Let $S(z)\not\equiv 0$ and $T(z)$ be rational functions such that $S(z)\sim C_{S} z^{m}$ as $z \rightarrow \infty$, where $C_{S} \in \C$ and
	$m\in \mathbb{Z}$. Suppose that $f$ is a meromorphic solution of
	\begin{equation*} %\label{ST}
	f^{\prime }+S(z)f=T(z).
	\end{equation*}%
	Then the following holds.
	\begin{itemize}
		\item[\textnormal{(a)}] If $m\leq -1$, then $f$ is rational.
		\item[\textnormal{(b)}] If $m\geq 0$, then $f$ has at most finitely many poles and
		$$
		\log M(r,f)=\frac{|C_S|}{m+1}r^{1+m}(1+o(1)),\quad r\to\infty.
		$$
	\end{itemize}
\end{lemma}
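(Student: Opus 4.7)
The plan is to mirror the two-step strategy used in the proof of Lemma~\ref{l2}: first handle the case when $f$ is entire via Wiman--Valiron analysis at maximum modulus points, then reduce the meromorphic case to the entire one by writing $f=g/\Pi$ with $\Pi$ a polynomial capturing the (finitely many) poles of $f$. The simplification compared with Lemma~\ref{l2} is that the equation is first-order, so the Wiman--Valiron expansion keeps only the leading term $\nu(r)/z$ rather than $(\nu(r)/z)^2$; this changes the exponent of growth from $1+m/2$ to $m+1$.

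For the entire case, I would suppose $f$ is transcendental entire and divide the ODE by $f$. At a sequence of maximum modulus points $z_n$ with $|z_n|=r_n\to\infty$ and $|f(z_n)|=M(r_n,f)$, the transcendence of $f$ makes $M(r,f)$ grow faster than any power of $r$, so $T(z_n)/f(z_n)=o(1)$ since $T$ is rational. The standard Wiman--Valiron expansion $f'(z)/f(z)=(\nu(r)/z)(1+o(1))$ outside an exceptional set $E$ of finite logarithmic measure, combined with $S(z)\sim C_Sz^m$, then yields
$$
\nu(r) = |C_S|\, r^{m+1}(1+o(1)),\qquad r\to\infty,\ r\notin E.
$$
When $m\le -1$ this forces $\nu(r)=O(1)$, contradicting the transcendence of $f$; so in case~(a) every entire solution is a polynomial. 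When $m\ge 0$, monotonicity of $\nu(r)$ together with the usual lemma for absorbing $E$ (exactly as in the proof of Lemma~\ref{l2}) removes the exception, integration of $\log\mu(r)-\log\mu(r_0)=\int_{r_0}^{r}\nu(t)/t\,dt$ yields $\log\mu(r)=\tfrac{|C_S|}{m+1}r^{m+1}(1+o(1))$, and the Jank--Volkmann comparison $\mu(r)\le M(r,f)\le\mu(r)\bigl(\nu(2r)+2\bigr)$ then gives the claimed asymptotic for $\log M(r,f)$.

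For the meromorphic case, I would first argue that $f$ has only finitely many poles: at any pole $z_0$ of $f$ of order $k\ge 1$ that is not a pole of $S$, the function $f'$ has a pole of order $k+1$ at $z_0$, while $Sf$ has a pole of order at most $k$; so the left-hand side of the ODE has a pole of order $k+1$ at $z_0$, which cannot be balanced by the rational right-hand side $T$ except at finitely many points. Writing $f=g/\Pi$ with $g$ entire and $\Pi$ a polynomial whose zeros are precisely the poles of $f$ counted with multiplicity, substitution gives
$$
g' + S^*(z)\,g = \Pi(z)\,T(z),\qquad S^*(z) = S(z)-\frac{\Pi'(z)}{\Pi(z)}.
$$
Since $\Pi'/\Pi=O(1/|z|)$, we still have $S^*(z)\sim C_Sz^m$ when $m\ge 0$, and $|S^*(z)|=O(1/|z|)$ when $m\le -1$ (which still forces $\nu(r,g)=O(1)$ in the entire argument applied to $g$). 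Thus in case~(a), $g$ is a polynomial and hence $f$ is rational, while in case~(b) the asymptotic for $\log M(r,g)$ transfers to $f$ via $\log M(r,f)=\log M(r,g)+O(\log r)$.

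The main obstacle will be the sharp control of the leading asymptotic for $\nu(r)$, together with the removal of the exceptional set $E$ in the Wiman--Valiron step. Since this was already successfully carried out in the second-order setting of Lemma~\ref{l2}, the identical argument transfers here with only the exponents changed; the rest of the proof is essentially bookkeeping.
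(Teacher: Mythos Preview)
Your proposal is correct and follows exactly the approach the paper intends: the paper omits the proof of Lemma~\ref{l3}, stating only that it ``has very similar components as that of Lemma~\ref{l2}'', and your argument is precisely the first-order analogue of that proof (Wiman--Valiron at maximum modulus points for the entire case, then the $f=g/\Pi$ reduction for the meromorphic case). The details you supply, including the pole-counting to show $f$ has finitely many poles and the computation $S^*=S-\Pi'/\Pi$, are all correct.
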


The proof of Lemma~\ref{l3} has very similar components as that of Lemma~\ref{l2}, and hence we omit the details. The next result is a second order case of a higher order result in Jank-Volkmann's book \cite[Satz 22.1, p. 208]{jank-volkmann}. Differing from \cite {jank-volkmann}, this result gives a concrete list of possible orders as well as possible maximum modulus types for $f$.

\begin{lemma}\label{possible-orders}
	Let $f$ be a transcendental meromorphic solution of
	\begin{equation}\label{RST2}
	f''+R(z)f'+S(z)f=T(z),
	\end{equation}
	where the coefficients $R(z)\not\equiv 0,S(z)\not\equiv 0,T(z)$ are rational functions such that $R(z)\sim C_Rz^n$
	and $S(z)\sim C_Sz^m$ as $z\to\infty$, where $C_R,C_S\in\C$
	and $n,m\in\Z$. Then $f$ has at most finitely many poles and
	$$
	\log M(r,f)=Cr^{\rho}(1+o(1)),\quad r\to\infty,
	$$
	with the following possibilities for $C$ and $\rho$:
	\begin{itemize}
		\item[\textnormal{(1)}] If $m>2n$, then $\rho=1+\frac{m}{2}\geq 1/2$ and $C=\frac{2\sqrt{|C_S|}}{m+2}$.
		\item[\textnormal{(2)}] If $n\leq m<2n$, then we have two possibilities:
		\begin{itemize}
			\item[\textnormal{(i)}] $\rho=n+1\geq 1$ and $C=\frac{|C_R|}{1+n}$,
			\item[\textnormal{(ii)}] $\rho=1+m-n\geq 1$ and $C=\frac{|C_S|}{(1+m-n)|C_R|}$.
		\end{itemize}
		\item[\textnormal{(3)}] If $m<n$, then $\rho=1+n\geq 1$ and
		$C=\frac{|C_R|}{1+n}$.
		\item[\textnormal{(4)}] If $m=2n$, then $\rho=1+n\geq 1$ and $C=\frac{\left| X\right| }{1+n}$, where $X$ is a complex solution of the quadratic equation
		$
		X^{2}+C_{R}X+C_{S}=0.
		$
	\end{itemize}
If $R(z)\equiv0$, then only Case \textnormal{(1)} is possible. In all cases, $\rho\geq 1/2$.
\end{lemma}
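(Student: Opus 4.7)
The plan is to follow the template of Lemma~\ref{l2}, with the added complication that $R(z)f'$ contributes a third summand to the dominant balance. First, by local inspection of \eqref{RST2}, every pole of $f$ must be a pole of one of $R,S,T$, so $f$ has only finitely many poles. Writing $f=g/\Pi$ with $\Pi$ a polynomial whose zeros match the poles of $f$, substitution gives
\begin{equation*}
g''+R^*(z)g'+S^*(z)g=\Pi(z)T(z),
\end{equation*}
with $R^*=R-2\Pi'/\Pi$ and $S^*=S-R\Pi'/\Pi+2(\Pi'/\Pi)^2-\Pi''/\Pi$. Since $\Pi'/\Pi=O(|z|^{-1})$, the functions $R^*,S^*$ retain the leading asymptotics $C_Rz^n$ and $C_Sz^m$, and it suffices to prove the asymptotic for the transcendental entire $g$.

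Next, applying the Wiman-Valiron identity $g^{(k)}(z)=(\nu(r)/z)^k(1+o(1))g(z)$ near maximum-modulus points, valid outside an exceptional set $E$ of finite logarithmic measure, and dividing the $g$-equation by $g$ (the right-hand side becoming $o(1)$ since $g$ is transcendental entire), I obtain the asymptotic balance
\begin{equation*}
\left(\tfrac{\nu(r)}{z}\right)^{2}(1+o(1))+R(z)\tfrac{\nu(r)}{z}(1+o(1))+S(z)(1+o(1))=o(1).
\end{equation*}
On $|z|=r$ the three summands have moduli of orders $\nu(r)^{2}r^{-2}$, $|C_R|\nu(r)r^{n-1}$, and $|C_S|r^m$. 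With the ansatz $\nu(r)\sim c\,r^{\rho}$, the three possible pairwise balances produce the candidate exponents $\rho=1+m/2$, $\rho=n+1$, and $\rho=m-n+1$, with respective dominance conditions $m>2n$, $m<2n$, and $m<2n$.

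The case analysis is then a direct triage. If $m>2n$, only $\rho=1+m/2$ with $c=\sqrt{|C_S|}$ is consistent, giving (1). If $m=2n$, all three summands are of common order $r^{2n}$, and equating leading coefficients forces $X:=\lim\nu(r)/r^{n+1}$ to solve $X^{2}+C_{R}X+C_{S}=0$, giving (4). If $n\le m<2n$, both $\rho=n+1$ and $\rho=m-n+1$ yield valid exponents $\geq 1$, producing the two sub-cases of (2). If $m<n$, then by integrality $m\le n-1$, so $\rho=m-n+1\le 0$ would contradict $\nu(r)\to\infty$, leaving only $\rho=n+1$, which is (3). The addendum for $R\equiv 0$ is immediate since then the middle summand vanishes and only the $A$-$C$ balance remains. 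In each surviving scenario, the precise leading coefficient of $\nu(r)$ is extracted by the same squeeze-iteration as in Lemma~\ref{l2}: monotonicity of $\nu$ absorbs the exceptional set $E$, the identity $\log\mu(r)=\log\mu(r_0)+\int_{r_0}^{r}\nu(t)/t\,dt$ converts $\nu$-bounds into $\mu$-bounds, and $\mu(r)\le M(r,g)\le\mu(r)(\nu(2r)+2)$ from \cite[Satz~4.4]{jank-volkmann} transfers these to $\log M(r,g)$; finally $\log M(r,f)=\log M(r,g)+O(\log r)$ absorbs the polynomial factor $\Pi$.

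The main obstacle is Case~(4). There all three terms in the balance are genuinely comparable, so more than mere bounds on $\nu(r)/r^{n+1}$ is needed: one must show that this ratio converges to a single root of $X^{2}+C_{R}X+C_{S}=0$ rather than oscillating between the two roots, which requires a second-order refinement of the Wiman-Valiron asymptotics together with the monotonicity of $\nu$. A minor technicality is verifying that no degenerate cancellation occurs in the passage from $R,S$ to $R^*,S^*$; when it does, the resulting equation has lower-order leading coefficients and is already handled by Lemmas~\ref{l2} and~\ref{l3}.
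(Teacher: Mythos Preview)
Your approach is essentially the same as the paper's: reduce to the entire case via $f=g/\Pi$, apply Wiman--Valiron near maximum-modulus points to obtain the three-term balance
\[
\nu(r)^{2}(1+o(1))+C_{R}z^{1+n}\nu(r)(1+o(1))+C_{S}z^{2+m}=o(1),
\]
and then run a case analysis on which terms dominate. The paper does exactly this, so the high-level strategy is correct.

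There is, however, a genuine gap. You introduce $\nu(r)\sim c\,r^{\rho}$ as an \emph{ansatz} and then try to argue a posteriori which $\rho$ is consistent. The paper does not proceed this way: it \emph{cites} the classical fact (Valiron \cite[p.~108]{valiron}, Jank--Volkmann \cite[pp.~204--208]{jank-volkmann}) that for a transcendental entire solution of a linear ODE with rational coefficients, $\nu(r)$ is asymptotically comparable to $r^{\alpha}$ for a single $\alpha>0$. This is the input that rules out $\nu(r)$ oscillating between two growth scales; without it, in Case~(2) for instance, the balance equation is an asymptotic quadratic in $\nu(r)$ whose two ``roots'' have moduli $\sim|C_R|r^{1+n}$ and $\sim(|C_S|/|C_R|)r^{1+m-n}$, and nothing you have written prevents $\nu(r)$ from jumping between them along a sequence. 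The squeeze-and-integrate argument of Lemma~\ref{l2} only works once a single exponent has been pinned down; it cannot itself select the exponent. Your diagnosis that the obstacle is confined to Case~(4) and can be cured by a ``second-order Wiman--Valiron refinement together with monotonicity of $\nu$'' is therefore misplaced: the same issue is already present in Case~(2), and the fix the paper uses is simply to invoke the classical asymptotic.

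There is also a specific error in your treatment of Case~(4). You define $X:=\lim \nu(r)/r^{n+1}$, which would be a nonnegative real number; but the quadratic $X^{2}+C_{R}X+C_{S}=0$ has complex coefficients and in general no positive real root. The paper handles this correctly: writing $z=re^{it}$ for a maximum-modulus point and $\nu(r)=Cr^{1+n}(1+o(1))$ with $C>0$, the balance becomes $C^{2}+C_{R}Ce^{i(n+1)t}+C_{S}e^{2i(n+1)t}\to 0$, so that $X:=Ce^{-i(n+1)t}$ (a complex number of modulus $C$) is what satisfies the quadratic asymptotically. The constant in the statement is then $|X|/(1+n)=C/(1+n)$, and there is no need to prove that $t$ or $X$ converges, only that $C=|X|$ for \emph{some} root $X$.
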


\begin{proof}
	The proof is very similar to that of Lemma~\ref{l2}, and hence we confine ourselves
	to sketch the key steps of the proof only.
	
	\medskip
	(a) Suppose first that $f$ is transcendental entire. Then we may use \eqref{logM}.
	Dividing \eqref{RST2} by $f$ and using Wiman-Valiron theory near the maximum modulus points of $f$, we obtain
	\begin{equation}\label{WV2}
	\nu(r)^2(1+o(1))+C_Rz^{1+n}\nu(r)(1+o(1))+C_Sz^{2+m}=o(1),\quad r\not\in E,
	\end{equation}
	where $r=|z|$ and the set $E\subset [1,\infty)$ has finite logarithmic measure.
	At least two of these three terms must be maximal and they must cancel out for $r$ large enough.
	We take for granted that $\nu(r)$ is asymptotically comparable to $r^\alpha$ for some $\alpha>0$,
	see \cite[p. 108]{valiron} or \cite[pp. 204-208]{jank-volkmann}. There are four possible cases as follows.
	\begin{itemize}
		\item[(i)] Suppose that the first and the second terms in \eqref{WV2} are maximal. Plugging
		in $r^\alpha$ in place of $\nu(r)$ and comparing the exponents, we find that
		$$
		2\alpha=\alpha+n+1,\quad 2\alpha>m+2,\quad \alpha+n+1>m+2.
		$$
		These simplify to $\alpha=n+1$ and $m<2n$. Here $n\geq 0$ must hold, for otherwise
		$\nu(r)$ is bounded, and hence $f$ is a polynomial. A more careful analysis of \eqref{WV2}
		reveals that $\nu(r)=|C_R|r^{1+n}(1+o(1))$, which gives us
		$$
		\log M(r,f)=\frac{|C_R|}{1+n}r^{1+n}(1+o(1)).
		$$
		\item[(ii)] Suppose that the first and the third terms in \eqref{WV2} are maximal. Similarly as in
		Case (i), we conclude that $\alpha=1+m/2$ and $m>2n$, where $m\geq -1$ must hold.
		In addition, $\nu(r)^2=|C_S|r^{2+m}(1+o(1))$, which gives us
		$$
		\log M(r,f)=\frac{2\sqrt{|C_S|}}{m+2}r^{1+m/2}(1+o(1)).
		$$
		\item[(iii)] Suppose that the second and the third terms in \eqref{WV2} are maximal. In this
		case $\alpha=1+m-n$, where $n\leq m<2n$. In addition,
		$|C_R|r^{1+n}\nu(r)=|C_S|r^{2+m}(1+o(1))$, which gives us
		$$
		\log M(r,f)=\frac{|C_S|}{(1+m-n)|C_R|}r^{1+m-n}(1+o(1)).
		$$
		\item[(iv)] Finally suppose that all three terms in \eqref{WV2} are maximal. Now
		$\alpha=n+1=1+m/2=1+m-n$, which is possible only if $m=2n$. Thus $\nu(r)=Cr^{1+n}(1+o(1))$, as $z \to \infty$. Substituting this into \eqref{WV2}, we obtain
		$$
		C^{2}r^{2(1+n)}(1+o(1))+C_RCr^{1+n}z^{1+n}(1+o(1))+C_Sz^{2(1+n)}=o(1).
		$$
		Taking $z=re^{it}$ where $t$ is chosen so that $z$ is near to the maximum modulus point of $f$, this yields
		
		\begin{equation}\label{WV3}
    	C^{2}(1+o(1))+C_RCe^{i(n+1)t}(1+o(1))+C_Se^{2i(1+n)t}=o(1).
		\end{equation}
	Since $C>0$, we can put $X=Ce^{-i(n+1)t}$, and by substituting this into \eqref{WV3}, we get
	$$
	X^{2}(1+o(1))+C_RX(1+o(1))+C_S=o(1),
	$$
	which is asymptotically equivalent to
	$
		X^{2}+C_RX+C_S=0.
	$
	\end{itemize}

	If $R(z)\equiv 0$, then the middle term in \eqref{WV2} disappears, and only Case (ii)
	can happen. This completes the proof in the case when $f$ is entire.
	
	\medskip
	(b) Suppose then than $f$ is meromorphic. Clearly $f$ has at most finitely many poles, so we
	may write $f=g/\Pi$, where $g$ is entire and $\Pi$ is a polynomial having zeros precisely
	at the poles of $f$. Substituting $f$ into \eqref{RST2}, we see that $g$ satisfies \eqref{DE-g}
	to which the reasoning in Part~(a) applies. This completes the proof in the case when $f$ is
	meromorphic.
\end{proof}

In the case $n\leq m<2n$ there are two possible orders for the solutions of \eqref{RST2}.
The following example shows that sometimes only one of the two possible orders actually
occur, and sometimes both occur.

\begin{example}\label{example}
	(a) In the equation
	$$
	f''+z^3f'-(z^4+z^2)f=0,
	$$
	we have $n=3$ and $m=4$, and hence $n\leq m<2n$. The possible orders are $n+1=4$ and $1+m-n=2$.
	However, all solutions $f\not\equiv 0$ of this equation have order 4, see \cite[Example~8]{GSW}.
	
	(b) In the equation
	$$
	f''-\frac{9z^4-4z^2+6z-2}{3z^2-2z}f'+\frac{18z^4-12z^3+6z}{3z-2}f=0,
	$$
	we have $n=2$ and $m=3$, and hence $n\leq m<2n$. The possible orders are $n+1=3$ and $1+m-n=2$.
	Since $f_1(z)=\exp\left(z^2\right)$ and $f_2(z)=\exp\left(z^3\right)$ are linearly independent
	solutions, both possible orders occur.
\end{example}

\begin{example}
	The functions $f_1(z)=e^{\sqrt{3}z^{2}}$ and $f_2(z)=e^{z^{2}}$ are linearly independent solutions of
	$$
	f''-\left(2\left(\sqrt{3}+1\right)z+z^{-1}\right)f'+4\sqrt{3}z^{2}f=0.
	$$
We have $n=1$ and $m=2$, and the only possible order is $n+1=2$. We remark also that
	$$
	C_R=-2\left(\sqrt{3}+1\right)\quad\textnormal{and}\quad
	C_S=4\sqrt{3}.
	$$
	On the other hand, $2$ and $2\sqrt{3}$ are solutions of the quadratic equation $X^{2}+C_RX+C_S=0$. This shows us that both of the possible types $1$ and $\sqrt{3}$ occur. 
\end{example}

%%%%%%%%%%%%%%%%%%%%%%%%%%%%%%%%%

\section{Proofs of theorems}\label{proofs-sec}

%%%%%%%%%%%%%%%%%%%%%%%%%%%%%%%%%

Throughout this section, $P'(z,f)$ and $P''(z,f)$ mean $\partial P / \partial z$ and $\partial^2 P/ \partial z^2$, respectively.

\subsection{\sc Proof of Theorem \ref{theo1.0}}
	
By differentiating \eqref{e1} twice, we get
	\begin{eqnarray}
	h^{\prime }&=& nf^{\prime }f^{n-1}+P^{\prime }\left( z,f\right), \label{e3} \\
	h^{\prime \prime }&=& nf^{\prime \prime }f^{n-1}+n\left( n-1\right) \left( f^{\prime }\right)
	^{2}f^{n-2}+P^{\prime \prime }\left( z,f\right). \label{e4}
	\end{eqnarray}
Substituting \eqref{e1}, \eqref{e3} and \eqref{e4} into \eqref{e2}, we obtain
	\begin{eqnarray}
	f^{n-2}\varphi &=& Q\left( z,f\right),\label{e5}
	\end{eqnarray}
where
	\begin{eqnarray}
	\varphi &=& r_{0}f^{2}+nr_{1}f'f+nf''f+n(n-1)(f')^2,\label{e7}\\
	Q(z,f)&=& -(P''(z,f)+r_{1}P'(z,f)+r_{0}P(z,f)-r_{2}).\nonumber %\label{e9}
	\end{eqnarray}
	
(1) If $\varphi\equiv 0$ then $f^n$ satisfies
	\begin{equation}
	(f^n)''+ r_1(z) (f^n)' +r_0 (z)f^n=0. \label{e13}
	\end{equation}
Clearly,  $f$ has a finite order. Since $f^n$ does not have simple zeros, and since $r_1$, $r_2$ are rational functions, the function $f^n$ has finitely many zeros and finitely many poles by Lemma~\ref{11}. The same is true for $f$. Thus  $f(z)=q(z)e^{\alpha(z)}$, where $q$ is a rational function and $\alpha$ is non-constant polynomial.  
%******* This come from the proof of the corollary 
By substituting this into the equation \eqref{e1}, we get
	\begin{equation}\label{reduced-eqn}
	q^n e^{n\alpha}+\sum_{j=0}^{\gamma_P}\beta_j e^{j\alpha}=h,
	\end{equation}
where $\beta_j$ are small functions of $f$ depending on the coefficients of $P(z,f)$, $ \alpha $ and $ q $. Then by Lemma~\ref{Mohon'ko lemma}, we have
	$$
	T(r,h)=nT(r,e^\alpha)+S(r,f)=nT(r,f)+S(r,f).
	$$
From this it follows that the order of $h$ is a positive integer.
Furthermore, if $ r_0 $ and $ r_1 $ are entire, then Lemma~\ref{11} and \eqref{e13} show that $ f^n $ has only simple zeros. Since $ n\ge 2 $, it follows that $ f $ does not have zeros, and hence $ q $ must be a constant.

(2) Now, assume that $\varphi\not\equiv 0$. Set, for $ j=1,2,3, \dots $,
		\begin{equation*}
		\vp_j = f^{j-2} \vp = r_{0}f^{j}+nr_{1}f'f^{j-1}+nf''f^{j-1}+n(n-1) \left(\frac{f'}{f} \right)^2 f^j.
		\end{equation*}
and then \eqref{e5} can be re-written as 	
	\begin{equation}\label{phij}
	f^{n-j} \varphi_j = Q(z,f), \quad j=1,2,3, \dots 
	\end{equation}
Since $ \gamma_Q =\gamma_P \le n-j $, and since all the coefficients of $ \vp_j  $ are of small proximity function of $f$, it follows from Lemma~\ref{Clunie} that 
	\begin{equation*} 
	m(r,\varphi_j)=S(r,f),\quad j=1,2,3,\dots.
	\end{equation*}
It follows from this, using the lemma on the logarithmic derivative and the first main theorem, that 
	\begin{equation} \label{mphij}
	j m(r,1/f) \le m(r,1/\varphi_j)+S(r,f) =N(r,\varphi_j)- N(r, 1/\varphi_j) +S(r,f).
	\end{equation}
Suppose that $f$ has a pole at $z_{0}$ of multiplicity $k$, which is not a zero or a pole of $r_{i}(z)$ $(i=0,1)$. Then the Laurent expansion of $f$ in the neighbourhood of $z_0$ is
	\begin{equation}\label{Laurent}
	f(z)=\frac{a_{k}}{(z-z_{0})^{k}}+\frac{a_{k-1}}{(z-z_{0})^{k-1}}+\cdots,\quad  a_{k}\neq 0.
	\end{equation}
Then, for all $z$ near $z_0$, we have
	$$
	nf''(z)f(z)^{j-1}+n(n-1)\left(\frac{f'(z)}{f(z)}\right)^2 f(z)^j =\frac{(nk+1)nka_k^j}{(z-z_0)^{jk+2}}+O\left(\left(\frac{1}{z-z_0}\right)^{jk+1}\right),
	$$
where $(nk+1)nka_k^j\neq 0$. Since the terms $r_0f^j$ and $nr_1f'f^{j-1}$ have a pole of multiplicity $\leq jk+1$
at $z_0$, it follows that $z_0$ is a pole of multiplicity $jk+2$ of $\varphi_j$. In other words,
any pole of multiplicity $k$ of $f$ is a pole of multiplicity $jk+2$ of $\varphi_j$, with at most
finitely many exceptions. Also, when $ j=1 $, we notice that any simple zero of $f$ is a pole of $\vp_1$. The remaining possible poles of $\varphi_j$ come from the poles of $r_0$ and $r_1$. Thus
		\begin{equation}	\label{pphij}	
		N(r,\varphi_j)=
		\left\{
		\begin{array}{lll}
		N_{1)} (r,1/f)+N(r,f)+ 2\overline{N}(r,f)+O(\log r), & & j=1,\\
		j N(r,f)+ 2\overline{N}(r,f)+O(\log r), & & j\ge 2.
		\end{array}
		\right.
		\end{equation}

The proof is now concluded in three cases: $j=1$, $j=2$ and $j\geq 3$.

(i) Suppose that $j=1$. If $f$ has a zero of multiplicity $ k\ge 3$ at $z_0$, which is not a pole or a zero of $r_i(z)$ ($ i=0,1 $), then the Taylor expansion of $ f $ in the neighborhood of $ z_0 $ is
		\begin{equation*}\label{taylor}	
		f(z)=a_k (z-z_0)^k + \cdots,\quad a_k\neq 0.
		\end{equation*}
Then, for all $ z $ near $z_0$, we have
		$$
		nf''(z)+n(n-1)\frac{f'(z)^2}{f(z)}= (nk-1)nk a_k (z-z_0)^{k-2} +\cdots ,
		$$
where $(nk-1)nk a_k\neq 0$. Since the terms $r_0f$ and $nr_1f'$ have a zero of multiplicity $\ge k-1$ at $z_0$, it follows that $z_0$ is a zero of $\vp_1$ of multiplicity $k-2$. In other words, any zero of $f$ of multiplicity $k\ge 3$ is a zero of $\varphi_1$ of multiplicity $k-2$, with at most finitely many exceptions. Thus,
	\begin{equation}\label{zphi1}
	N_{(3}(r,1/f)-2\overline{N}_{(3}(r,1/f) \le  N(r,1/\varphi_1)+O(\log r).
	\end{equation}

Using \eqref{mphij}, \eqref{pphij}, \eqref{zphi1} and taking into the account that $N(r,1/f)= N_{2)}(r,1/f) + N_{(3} (r,1/f)$, we get
		\begin{eqnarray*}
		T(r,f) &=& N_{2)}(r,1/f) + N_{(3} (r,1/f) +m(r,1/f) + O(1)\\
		&\leq& N_{2)}(r,1/f) + 2\overline{N}_{(3} (r,1/f)+N(r,1/\varphi_1) +m(r,1/f) + O(\log r)\\
				&\le&  N_{1)} (r,1/f)+N_{2)}(r,1/f) +2\overline{N}_{(3}(r,1/f)+ N(r,f)+ 2\overline{N}(r,f)+S(r,f)\\
				&=& 2\overline{N}(r,1/f)+ N(r,f)+ 2\overline{N}(r,f)+S(r,f).
		\end{eqnarray*}
This proves Theorem~\ref{theo1.0}(2) when $ j=1 $.

(ii) Suppose that $j=2$. Analogously as in (i), we see that any zero of $f$ of multiplicity $k\ge 2$ is a zero of $\varphi_2$ of multiplicity $2k-2$, with at most finitely many exceptions. Hence
		\begin{equation}\label{zd}
		2N_{(2}(r,1/f)-2\overline{N}_{(2}(r,1/f) \le N(r,1/\varphi_2)+O(\log r).
		\end{equation}

Using \eqref{mphij}, \eqref{pphij}, \eqref{zd} and since $N(r,1/f)= N_{1)}(r,1/f) + N_{(2} (r,1/f)$, it follows  that	
		\begin{equation}\label{7 0}
		\begin{split}
		2T(r,f)	&= 2N(r,1/f)+2m(r,1/f)+O(1)\\
		&\le 2{N}_{1)}(r,1/f)+2\overline{N}_{(2}(r,1/f)+N(r,\varphi_2)+S(r,f) \\
		&\le 2\overline{N}(r,1/f)+2N(r,f)+2\overline{N}(r,f) + S(r,f).
		\end{split}
		\end{equation}
This proves Theorem~\ref{theo1.0}(2) when $ j=2$.
		
(iii) Finally suppose that $j\ge 3$. Using the Taylor series argument, we notice that any zero of $f$ of multiplicity $k$ is a zero of $\varphi_j$ of multiplicity $jk-2$, with at most finitely many exceptions. Thus
		\begin{equation*}
		jN(r,1/f) -2 \overline{N}(r,1/f)\le N(r,1/\varphi_j)+O(\log r).
		\end{equation*}
Using this together with \eqref{mphij} and \eqref{pphij}, we get 
		\begin{eqnarray*}
		jT(r,f) &=& jN(r,1/f)+jm(r,1/f)+O(1)\\
		&\leq & 2\overline{N}(r,1/f)+N(r,\varphi_j)+S(r,f)\\
		& \le & 2\overline{N}(r,1/f)+jN(r,f)+2\overline{N}(r,f) + S(r,f),
		\end{eqnarray*}		
which proves Theorem~\ref{theo1.0}(2) when $ j\ge 3$.

\subsection{\sc Proof of Theorem \ref{theo1.3}: The case of finitely many poles}\label{Case1}

We prove the assertions under the assumption that $f$ has finitely many poles.
The case of infinitely many poles will be discussed in Section~\ref{Case2} below.

First, we prove that $\rho(f)=\rho(h)$.
On one hand, it is clear from \eqref{e1} that $\rho(h)\le \rho(f)$. On the other hand,
	\begin{equation}\label{growth}
	\begin{split}
	nT(r,f)&=T(r,f^n) \le T(r,h)+T(r,P(z,f))+ O(\log r)\\
	& \le T(r,h)+\gamma_P m(r,f)+\Gamma_P N(r,f)+O(\log r).
	\end{split}
	\end{equation}
Since $f$ has finitely many poles and $\gamma_P\le n-2$, it follows that $ T(r,f)\le O(T(r,h)+\log r ) $, as $r\to\infty$, and so $\rho(f)\le \rho(h)$.

Let $\varphi$ be the function defined by \eqref{e7}.
Since $\gamma_Q \le n-2$ and since $f$ has finitely many poles, an application of Lemma~\ref{Clunie} on the equation \eqref{e5} results in $T(r,\varphi)=O(\log r)$. Next, we discuss two cases.

\medskip
(1) If $\varphi\equiv 0$, then from the proof of Theorem~\ref{theo1.0}, we 
obtain the conclusion in Theorem~\ref{theo1.3}(1).

\medskip	
(2) Suppose that $\varphi\not\equiv 0$. Using the facts that $f$ is of finite order and has finitely many poles, it follows from \eqref{7 0} that
	\begin{equation*}
	T\left( r,f\right) =\overline{N}(r,1/f) +O(\log r).
	\end{equation*}
Also, from \eqref{zd} with $\varphi_2=\varphi$,  we obtain
		\begin{eqnarray*}
		\overline{N}_{(2}(r,1/f) &\le& 2N_{(2}(r,1/f)-2\overline{N}_{(2}(r,1/f) \le N(r,1/\varphi)+O(\log r)\\
		&\le& T(r,\varphi)+O(\log r) =O (\log r).
		\end{eqnarray*}
Thus,
		$$
		T(r,f)=N_{1)}(r,1/f)+O(\log r).
		$$
This proves one aspect of Theorem~\ref{theo1.3}(2).
	
In order to prove the two additional possibilities in Theorem~\ref{theo1.3}(2), we will make
some preparations. Differentiating \eqref{e7}, we get
	\begin{align}
	\varphi ^{\prime }= &r_{0}^{\prime }f^{2}+\left( 2r_{0}+nr_{1}^{\prime
	}\right) f^{\prime }f+nr_{1}\left( f^{\prime }\right) ^{2}+nr_{1}f^{\prime
		\prime }f+nf^{\prime \prime \prime }f+n\left( 2n-1\right) f^{\prime \prime
	}f^{\prime }. \label{e19}
	\end{align}
	If $z_{0}$ is a simple zero of $f$ that is not a zero or a pole of $r_{0}, r_{1}$ and $\varphi $, then it follows from \eqref{e7} and \eqref{e19} that
	$$
	\varphi(z_0)=n(n-1)f'(z_0)^2\quad\textnormal{and}\quad \varphi'(z_0)= nr_1(z_0) f'(z_0)^2+n(2n-1)f''(z_0)f'(z_0).
	$$
Thus $z_{0}$ is a zero of $\left( r_{1}\varphi -\left(
	n-1\right) \varphi ^{\prime }\right) f^{\prime }+\left( 2n-1\right) \varphi
	f^{\prime \prime }.$
	Define the meromorphic function
	\begin{equation}\label{e20}
	\psi =\frac{\left( r_{1}\varphi -\left( n-1\right) \varphi ^{\prime }\right)
		f^{\prime }+\left( 2n-1\right) \varphi f^{\prime \prime }}{f}.
	\end{equation}
Then $T\left( r,\psi \right) =O\left( \log r\right) $ and
	\begin{equation}
	f^{\prime \prime }=\left( \frac{n-1}{2n-1}\frac{\varphi ^{\prime }}{\varphi }%
	-\frac{1}{2n-1}r_{1}\right) f^{\prime }+\frac{\psi }{\left( 2n-1\right)
		\varphi }f.  \label{e21}
	\end{equation}
By substituting this into \eqref{e7}, we get
	\begin{equation}\label{e22}
	\varphi(z) =a(z) f^{2}+b(z) f^{\prime}f+c\left( f^{\prime }\right) ^{2},  
	\end{equation}
where
	\begin{equation} \label{abc}
	a\left( z\right) =r_{0}+\frac{n}{2n-1}\frac{\psi }{\varphi },\text{ }b\left(
	z\right) =\frac{n\left( n-1\right) }{2n-1}\left( \frac{\varphi ^{\prime }}{%
		\varphi }+2r_{1}\right),\text{ }c=n\left( n-1\right).
	\end{equation}
Since $a$ and $b$ are rational functions,  we obtain from \eqref{e22} and Lemma~\ref{liao} that
	\begin{equation}\label{e23}
	c\left( b^{2}-4ca\right) \frac{\varphi ^{\prime }}{\varphi }+b\left(
	b^{2}-4ca\right) -n\left( n-1\right) \left( b^{2}-4ca\right) ^{\prime
	}\equiv 0.
	\end{equation}
We now distinguish two sub-cases which correspond to the two sub-cases in Theorem~\ref{theo1.3}(2).

\begin{itemize}
\item[(i)]
	 If $b^{2}-4ca\not\equiv 0$, then by dividing both
	sides of \eqref{e23} by $b^{2}-4ca$ and substituting $b$, we get
	\begin{equation}\label{r1}
	r_1 = \frac{2n-1}{2} \frac{\left( b^{2}-4ac\right) ^{\prime }}{b^{2}-4ac}-n\frac{\varphi ^{\prime }}{\varphi }.
	\end{equation}
Since $a,b$ and $\varphi$ are rational, we obtain $r_1(z)=O(z^{-1})$, as $z\to\infty$. Furthermore, from \eqref{e21}, we have
	\begin{equation}\label{fRS}
	f'' +R(z) f' +S(z)f=0,
	\end{equation}
where
	\begin{align*}
	R(z)=- \left( \frac{n-1}{2n-1}\frac{\varphi ^{\prime }}{\varphi }-\frac{1}{2n-1}r_{1}\right) =O(z^{-1}), \quad \text{as}\; z\to \infty,
	\end{align*}
and $S(z)= -\frac{1}{2n-1}\frac{\psi }{\varphi }$ is a rational function. Suppose now that $S(z)\sim C_Sz^p$ as $z\to\infty$, where $C_S\in\C$. Applying Lemma~\ref{l2} on \eqref{fRS}, we have
	$p\geq -1$ and
	\begin{equation} \label{e24}
	\log M(r,f)=\frac{2\sqrt{\left| C_S\right|} }{p+2}r^{1+p/2}(1+o(1)),\quad r\to\infty.
	\end{equation}
	On the other hand, by applying Lemma \ref{l2} on \eqref{e2}, we get
	$$
	\log M(r,h)=\frac{2\sqrt{\left| C_{{0}}\right|} }{m+2}r^{1+m/2}(1+o(1)),\quad r\to\infty.
	$$
Since $\rho(f)=\rho(h)$, we have $p=m \geq -1$. 

Suppose first that $f$ has no poles. Dividing both sides of \eqref{e7} by $f^{2}$ and by applying Wiman-Valiron theory, we get
	\begin{equation*}
	C_{0}z^{m+2}+nC_{1} \nu(r) (1+o(1))+ n^{2} \nu(r)^{2} (1+o(1))=o(1),\quad r\not\in E,
	\end{equation*}
where $r=|z|$ and the set $E\subset[1,\infty)$ has finite logarithmic measure.
This leads to
	\begin{equation}
	\frac{C_{0}}{n^{2}}r^{m+2}+ \nu(r)^{2} (1+o(1))=o(1),\quad r\not\in E. \label{e25}
	\end{equation}
Applying the proof of Lemma~\ref{l2} to \eqref{e25}, we deduce that
	\begin{equation}
	\log M(r,f)=\frac{2\sqrt{\left| C_{0}\right| }}{n(m+2)}r^{1+m/2}(1+o(1)),\quad r\to\infty, \label{e26}
	\end{equation}
without an exceptional set.	Combining \eqref{e24} and \eqref{e26}, we get $\left| C_{0}\right| =n^{2}\left| C_S\right| $.

Suppose then that $f$ has poles, of which there can be at most finitely many by the assumption.
Then $g=f\Pi$ is entire, where $\Pi$ is a polynomial having zeros precisely at the poles of $f$.
Then the reasoning in the previous paragraph applies for $g$, and an analogue of \eqref{e26} holds
for $g$. Thus \eqref{e26} holds for $f$, and the conclusion $\left| C_{0}\right| =n^{2}\left| C_S\right| $
is valid in this case as well.
	
\item[(ii)]	 If $b^{2}-4ca\equiv 0$,  then \eqref{e22} can be written as
	\begin{align*}
	\varphi&= a f^2 +bff'+c(f')^2 = \frac{b^2}{4c} f^2 +bff'+c(f')^2\\
			&= c\left(\frac{b^2}{4c^2} f^2 +\frac{b}{c} ff'+(f')^2\right)=c\left( f'+\frac{b}{2c}f\right)^2\\
			&= n\left( n-1\right) \left( f^{\prime }+\frac{b}{2n\left( n-1\right) }f\right) ^{2}.
	\end{align*}
Set
	$$
	Q=f^{\prime }+\frac{b}{2n\left( n-1\right) }f.
	$$
Since $\varphi$ is rational, so is $Q$. By substituting $b\left(
	z\right) =\frac{n\left( n-1\right) }{2n-1}\left( \frac{\varphi ^{\prime }}{
		\varphi }+2r_{1}\right)$ and $\varphi =n\left( n-1\right)
	Q^{2}$ into the equation above, we get
	\begin{equation}\label{e27}
	f^{\prime }=\frac{-1}{2n-1}\left( \frac{Q^{\prime }}{Q}+r_{1}\right) f+Q.
	\end{equation}
Differentiating \eqref{e27} inductively, we obtain, for any integer $j\ge 1$, that
	\begin{equation}\label{e28}
	f^{(j)}=R_j f +S_j,
	\end{equation}
where $R_j$ and $S_j$ are rational functions. Using \eqref{e28} and the assumption $\gamma_P\leq n-2$, the differential polynomial $P(z,f)$ in \eqref{e1} can be rewritten as $P(z,f)=P_{n-2}(f)$,
	where $P_{n-2}(f)$ is a polynomial in $f$ of degree at most $n-2$ with rational coefficients. Thus, the equation \eqref{e1} becomes
	\begin{equation}\label{e29}
	h=f^n+P_{n-2}(f).
	\end{equation}
Differentiating \eqref{e29} twice and using \eqref{e27} and \eqref{e28} yields
	\begin{equation}
	h^{\prime }=\frac{-n}{2n-1}\left( \frac{Q^{\prime }}{Q}+r_{1}\right)
	f^{n}+nQf^{n-1}+P'_{n-2}(f),  \label{e30}
	\end{equation}%
and
	\begin{equation}\label{e31}
	\begin{split}
	h^{\prime \prime }=&\left( \frac{n^{2}}{\left( 2n-1\right) ^{2}}\left( \frac{%
		Q^{\prime }}{Q}+r_{1}\right) ^{2}-\frac{n}{2n-1}\left( \frac{Q^{\prime }}{Q}+r_{1}\right) ^{\prime }\right) f^{n}\\
		&+\left( nQ^{\prime }-nQ\left( \frac{%
		Q^{\prime }}{Q}+r_{1}\right) \right)
	f^{n-1}+n(n-1)Q^{2}f^{n-2}+P_{n-2}^{\prime \prime }\left( f\right).
	\end{split}
	\end{equation}
By substituting now \eqref{e29}-\eqref{e31} into \eqref{e2} and then using \eqref{e28} to get rid
of all derivatives of $f$, we obtain an equation of the form
	$$
	Af^n=P_{n-1}(f),
	$$
where $A$ is a rational function and $P_{n-1}(f)$ is a polynomial in $f$ of degree at most $n-1$.
Using Lemma~\ref{Mohon'ko lemma}, it follows that $A\equiv 0$, or alternatively
	\begin{equation}\label{e32}
	\left( \frac{n}{2n-1}\right) ^{2}\left( \frac{Q^{\prime }}{Q}+r_{1}\right)
	^{2}-\frac{n}{2n-1}\left( \frac{Q^{\prime }}{Q}+r_{1}\right) ^{\prime }-%
	\frac{n}{2n-1}\left( \frac{Q^{\prime }}{Q}+r_{1}\right) r_{1}=-r_{0}.
	\end{equation}
We claim that $r_1 \not\to 0$, as $ z\to \infty$. To prove this, we assume the contrary, i.e., there exists an integer $s\ge 1$ such that $|r_1(z)|=O(|z|^{-s})$ as $z\to \infty$. Then, from the equation \eqref{e32}, we obtain $|r_0(z)|=O\left(|z|^{-2}\right)$ as $z\to \infty$. But then Lemma~\ref{l2} implies that $h$ is rational, which is a contradiction. In particular, $r_1\not\equiv 0$. Now, dividing \eqref{e32} by $r_1 (Q'/Q+r_1)\not\equiv 0$ and letting $z\to\infty$, we obtain
	$$
	\lim_{z\rightarrow \infty }\frac{r_{0}\left( z\right) }{r_{1}^{2}\left(
		z\right) }=\frac{n\left( n-1\right) }{\left( 2n-1\right) ^{2}}.
	$$
Recalling \eqref{global}, this gives us $m=2l\geq 0$ and
	$$
	|C_1|=(2n-1)\sqrt{\frac{|C_0|}{n(n-1)}}.
	$$
Also recall that $f$ satisfies \eqref{e27}, where
	$$
	S(z)=\frac{1}{2n-1}\left( \frac{Q^{\prime}(z)}{Q(z)}+r_{1}(z)\right).
	$$
Since $Q$ is rational and $r_1 \not\to 0$, we obtain
$S(z)\sim \frac{r_1(z)}{2n-1}\sim\frac{C_{1}}{2n-1}z^l$ as $z\to\infty$.
Hence, Lemma~\ref{l3} gives us
	\begin{eqnarray*}
	\log M(r,f) &=&\frac{\left| C_1\right| }{(l+1)(2n-1)}r^{1+l}(1+o(1))\\
	&=&\frac{2\sqrt{\left|C_0\right|}}{(m+2)\sqrt{n(n-1)}}r^{1+\frac{m}{2}}(1+o(1)),
	\quad r\to\infty.
	\end{eqnarray*}
This yields the assertion in the sub-case (ii).
\end{itemize}

\subsection{\sc Proof of Theorem \ref{theo1.3}: The case of infinitely many poles}\label{Case2}

The case when $f$ has finitely many poles was proved in Section~\ref{Case1}. In this section
we suppose that $f$ has infinitely many poles such that $N(r,f)=S(r,f)$, and aim for
a contradiction.  

Let $\varphi$ be the function defined in \eqref{e7}.
If $\varphi \equiv 0$, then it follows from the proof of Theorem~\ref{theo1.0} that $f$ has
finitely many zeros and poles, which is a contradiction. Hence we may suppose that $\varphi \not\equiv 0$.
A simple modification of Case~(2) in the proof of Theorem~\ref{theo1.0} gives us
	\begin{equation}\label{varphi-small}
	T(r,\varphi)=S(r,f).
	\end{equation}

Next we will follow the reasoning in Section~\ref{Case1}. We find that the meromorphic function $\psi$ defined in \eqref{e20} satisfies
	$$
	T(r,\psi)=S(r,f),
	$$
and that $f$ solves the second order differential equation \eqref{e21}. 
The coefficients $a$, $b$ in \eqref{abc} are now meromorphic functions satisfying
	$$
	T(r,a)=S(r,f)\quad\textnormal{and}\quad T(r,b)=S(r,f),
	$$ 
and hence they are small functions of $f$. 
Suppose that $f$ has a pole at $z_0$ of multiplicity $k$, which is not a zero or a pole of $r_i(z)$ ($i=0,1$). Then $f$ has the Laurent expansion \eqref{Laurent} in the neighbourhood of $z_0$.
Using Lemma~\ref{liao}, this discussion leads to \eqref{e23}, where we distinguish two sub-cases.

	\begin{itemize}
	\item[(i)] If $b^{2}-4ca\not\equiv 0$, we get \eqref{r1}.
	Then, for all $z$
	near $z_0$, it is easy to see that
	\begin{eqnarray*}
	\varphi(z) &=& \left(r_0f^2+nr_1f'f+nf''f+n(n-1)(f')^2\right)(z)\\
	&=&\frac{kn(kn+1)a_k^2}{(z-z_0)^{2k+2}}+O\left(\frac{1}{(z-z_0)^{2k+1}}\right),\\
	\left(\frac{\psi}{\varphi}\right)(z) &=& \left(r_1\frac{f'}{f}-(n-1)\frac{\varphi'}{\varphi}\frac{f'}{f}+(2n-1)\frac{f''}{f}\right)(z)\\
	&=& \frac{k(k+1)}{(z-z_0)^2}+O\left(\frac{1}{z-z_0}\right).
	\end{eqnarray*}
	This gives raise to
	\begin{eqnarray*}
	a(z) &=& \left(r_0+\frac{n}{2n-1}\frac{\psi}{\varphi}\right)(z)=\frac{kn(k+1)}{2n-1}\frac{1}{(z-z_0)^2}+O\left(\frac{1}{z-z_0}\right),\\
	b(z) &=& \frac{n(n-1)}{2n-1}\left(\frac{\varphi'}{\varphi}+2r_1\right)(z)=\frac{-2n(n-1)(k+1)}{2n-1}\frac{1}{z-z_0}+O(1).
	\end{eqnarray*}
	It is therefore easy to see that $b^2-4ac$ has a double pole at $z_0$, so that
		$$
		\frac{(b^2-4ac)'}{b^2-4ac}=\frac{-2}{z-z_0}+O(1)
		$$
	near $z_0$. Finally, the rational function $r_1$ in \eqref{r1} satisfies
		$$
		r_1(z)=\left(\frac{2n-1}{2} \frac{\left( b^{2}-4ac\right) ^{\prime }}{b^{2}-4ac}-n\frac{\varphi ^{\prime }}{\varphi }\right)(z)=\frac{2kn+1}{z-z_0}+O(1)
		$$
	near $z_0$. Since $2kn+1\neq 0$, this means that $r_1$ has a simple pole at $z_0$, which contradicts
	our assumption on $z_0$.
	\item[(ii)]  If $b^{2}-4ca\equiv 0$, we get \eqref{e27}, where $T(r,Q)=S(r,f)$ because of \eqref{varphi-small}. Denote $F=Q'/Q+r_1$ for short. It is easy to see that
		\begin{eqnarray*}
		Q(z)&=&\left(f'+\frac{b}{2n(n-1)}f\right)(z)\\
		&=&-\frac{(2kn+1)a_k}{2n-1}\frac{1}{(z-z_0)^{k+1}}
		+O\left(\frac{1}{(z-z_0)^{k}}\right)
		\end{eqnarray*}
	near $z_0$. In particular, $Q$ has a pole of multiplicity $k+1$ at $z_0$. Thus
		$$
		F(z)=-\frac{k+1}{z-z_0}+O(1),
		$$
	which implies that
		\begin{eqnarray*}
		&&\left(\left(\frac{n}{2n-1}\right)^2F^2-\frac{n}{2n-1}F'-\frac{n}{2n-1}r_1F\right)(z)\\
		&&\qquad\qquad =\frac{n(k+1)(kn-n+1)}{(2n-1)^2}\frac{1}{(z-z_0)^2}+O\left(\frac{1}{z-z_0}\right).
		\end{eqnarray*}
	However, from \eqref{e32} it now follows that $r_0$ has a double pole at $z_0$, which  contradicts
	our assumption on $z_0$.
	\end{itemize}

From the sub-cases (i) and (ii) we conclude that all possible poles of $f$ must be among the
zeros and poles of the rational coefficients $r_i(z)$ ($i=0,1$). But this contradicts the
assumption that $f$ has infinitely many poles.

%%%%%%%%%%%%%%%%%%%%%%%%%%%%%%%%%

\section{Proofs of corollaries}\label{proofs-cor-sec}

%%%%%%%%%%%%%%%%%%%%%%%%%%%%%%%%%

\subsection{\sc Proof of Corollary \ref{theo1.1}}

We prove that under either one of the assumptions (1) and (2), the function $\vp$ defined in \eqref{e7} vanishes identically. Suppose that $\vp\not\equiv 0$. From the proof of Theorem~\ref{theo1.0}, $ f $ satisfies Case (2) of Theorem~\ref{theo1.0}.

Under the assumption (1), it follows that for any $\veps \in (0,1)$ sufficiently small, there is an $R>0$ such that
	$$
	\frac2j\overline{N}(r,1/f) + N(r,f) + \frac2j\overline{N}(r,f)\le  (1-\varepsilon) T(r,f),\quad r>R.
	$$
This together with Case (2) of Theorem~\ref{theo1.0} gives $T(r,f)=S(r,f)$, which is absurd.

If the assumption (2) holds, we have $\gamma_Q \leq \Gamma_Q \leq n-3$
since $\gamma_P \leq \Gamma_P \leq n-5$. Therefore, by
Lemmas~\ref{Clunie} and~\ref{Clunie-Doeringer}, we deduce from \eqref{phij} when $ j=2,3$ that $T(r,\varphi)=T(r,\varphi_2)=S(r,f)$ and $T(r,\varphi_3)=S(r,f)$. Then
$$
T(r,f)=T\left( r,\dfrac{\varphi_{3}}{\varphi}\right) =S(r,f),
$$
which is a contradiction.

Thus, either one of the assumptions (1) or (2) implies $\varphi\equiv 0$, and hence the assertion 
follows from the proof of Theorem~\ref{theo1.0}.

\subsection{\sc Proof of Corollary \ref{CC-corollary}: The linearly independent case}\label{linearly-independent}

Suppose that the functions $h_1,h_2$ in \eqref{hee} 
are linearly independent. Then their Wronskian $W$ satisfies 
	\begin{equation}\label{wr}
	W=W\left(h_1,h_2\right)=-p_{1}p_{2}\left( \frac{p_{1}^{\prime }}{p_{1}}-\frac{p_{2}^{\prime }}{p_{2}}+\alpha_{1}^{\prime }-\alpha_{2}^{\prime }\right) e^{\alpha_1(z)+\alpha_2(z)} \not\equiv 0,
	\end{equation}
and the functions $h_1,h_2$ solve \eqref{e2}, where $r_2(z)\equiv 0$ and
	\begin{eqnarray*}
	r_{1}(z)&=&-(\alpha_{1}^{\prime }+\alpha_{2}^{\prime })-\frac{p_{1}^{\prime }}{p_{1}}-
	\frac{p_{2}^{\prime }}{p_{2}}-\frac{\left( \frac{p_{1}^{\prime }}{p_{1}}-
	\frac{p_{2}^{\prime }}{p_{2}}+\alpha_{1}^{\prime }-\alpha_{2}^{\prime }\right)^{\prime }}
	{\frac{p_{1}^{\prime }}{p_{1}}-\frac{p_{2}^{\prime }}{p_{2}}
	+\alpha_{1}^{\prime }-\alpha_{2}^{\prime }}\\
	r_{0}(z)&=&\alpha_{1}^{\prime }\alpha_{2}^{\prime }+\frac{p_{1}^{\prime }}{p_{1}}
	\alpha_{2}^{\prime }+\frac{p_{2}^{\prime }}{p_{2}}\alpha_{1}^{\prime }+\frac{
	p_{1}^{\prime }}{p_{1}}\frac{p_{2}^{\prime }}{p_{2}}\\
	&&+\left( \frac{
	p_{2}^{\prime }}{p_{2}}+\alpha_{2}^{\prime }\right) \left( -\frac{\left( \frac{
	p_{2}^{\prime }}{p_{2}}+\alpha_{2}^{\prime }\right) ^{\prime }}{\frac{
	p_{2}^{\prime }}{p_{2}}+\alpha_{2}^{\prime }}
	+\frac{\left( \frac{p_{1}^{\prime }}{p_{1}}-\frac{p_{2}^{\prime }}
	{p_{2}}+\alpha_{1}^{\prime }-\alpha_{2}^{\prime }\right)^{\prime }}
	{\frac{p_{1}^{\prime }}{p_{1}}-\frac{p_{2}^{\prime }}{p_{2}}
	+\alpha_{1}^{\prime }-\alpha_{2}^{\prime }}\right) 
	\end{eqnarray*}
are well-defined rational functions \cite[Proposition~1.4.6]{L}. 

Next, we prove that $s_1=s_2$. Suppose on the contrary to this claim that
$s_1\neq s_2$. Then clearly $|r_{1}(z)|\rightarrow \infty $, as $|z|\rightarrow +\infty $, thus the conclusion in Case (i) of Theorem~\ref{theo1.3}(2) cannot hold. 
To exclude Case (ii) of Theorem~\ref{theo1.3}(2), we
notice that 
	\begin{eqnarray*}
	m &=&\deg _{\infty }(r_{0})=s_{1}+s_{2}-2, \\
	l &=&\deg _{\infty }(r_{1})=\max \{s_{1}-1,s_{2}-1\}.
	\end{eqnarray*}
If $m=2l$ holds, then 
	\[
	s_{1}+s_{2}-2=2\max \{s_{1}-1,s_{2}-1\}=s_{1}+s_{2}-2+|s_{1}-s_{2}|,
	\]
i.e., $s_{1}=s_{2}$, which is a contradiction.  Thus it remains to prove that Theorem~\ref{theo1.3}(1) cannot hold. Indeed, if $f(z)=q(z)e^{\alpha (z)}$, then from \eqref{characteristics} we get $\deg (\alpha )=\max \{s_{1},s_{2}\}$, say $\deg(\alpha)=s_{1}$.
Substituting this $f$ in \eqref{e1} yields an equation of the form \eqref{reduced-eqn}, where the
dominating terms $q^ne^{n\alpha}$ and $p_1e^{\alpha_1}$ must cancel out. The remainder of \eqref{reduced-eqn} is clearly impossible. This completes the proof of $s_1=s_2$. 

The remainder of the proof is divided to the three possible situations in Theorem~\ref{theo1.3}.

\medskip
(1) If Case (1) of Theorem~\ref{theo1.3} occurs, then $f$ has the
form $q(z)e^{\alpha(z) }$, where $q$ is non-zero rational function and $\alpha $ is non-constant polynomial of degree $s=s_1=s_2$, whose leading  coefficient is some constant $a\in\C\setminus\{0\}$. From
\cite{Stein}, we have 	
	\begin{eqnarray*}
	T(r,h)&=&(|a_1|+|a_2|+|a_1-a_2|) \frac{r^s}{2\pi} (1+o(1)),\\
	T(r,f)&=&|a| \frac{r^s}{\pi}(1+o(1)).
	\end{eqnarray*}
Hence \eqref{characteristics} yields	
		\begin{eqnarray}
		n|a| & = & \frac{|a_1|+|a_2|+|a_1-a_2|}{2} \label{na}\\
			&\ge & \frac{|a_1|+|a_2|+\left| |a_1|-|a_2| \right|}{2} \nonumber\\
			& = &  \max\left\{|a_1|,|a_2|\right\} \overset{\textnormal{set}}{=} |a_1|. \nonumber
		\end{eqnarray}
Suppose that $n|a|>|a_1|$, and let $\eta_1 = \frac{a_1}{na}$ and $\eta_2 =\frac{a_2}{na}$. Then $0<|\eta_2|\le|\eta_1|<1$, and \eqref{na} can be re-written as $2 = |\eta_1| + |\eta_2| +|\eta_1 -\eta_2|$,
which is equivalent to
	$$	
	2-2(|\eta_1| + |\eta_2|)+|\eta_1| |\eta_2| (1+\cos \varphi) =0,\quad \varphi = \arg (\eta_1 /\eta_2).
	$$
Solving this equation for $|\eta_1|$ yields
		$$
		|\eta_1| = \frac{2|\eta_2|-2}{|\eta_2|(1+\cos\varphi) -2}>1,
		$$
which is a contradiction. Thus $n|a|=|a_1|\geq |a_2|$, which leads us to consider three cases. 
\begin{itemize}
\item[(i)] If $a_1=a_2$, then the dominating terms $q^ne^{n\alpha}$
and $h$ in \eqref{reduced-eqn} must cancel out. This is possible only when $e^{\alpha_1}=e^{\alpha_2}$
because $\alpha_1(0)=0=\alpha_2(0)$ by the assumption. Thus $q^n=p_1+p_2$, and, consequently, $P(z,f)\equiv 0$. In particular, if $p_1,p_2$ are polynomials, then $q$ is also a polynomial.

\item[(ii)] If $|a_1|=|a_2|$ but $a_1\neq a_2$, then \eqref{reduced-eqn} has three dominating
terms $q^ne^{n\alpha},h_1,h_2$ of order~$s$. Due to the exponential factors, each of these
three terms tend exponentially to zero/infinity in sectors that cover $\C$ and may overlap. 
However, it is not possible
that all three terms would cancel out everywhere in $\C$ because of our assumption $|a_1|=|a_2|$, $a_1\neq a_2$. Hence this case is impossible. 

\item[(iii)] If $|a_1|>|a_2|$, then $n\alpha =\alpha_1$ from $n|a|=|a_1|$ and \eqref{reduced-eqn}. In other words,
$q^ne^{n\alpha}$ and $h_1$ are dominating terms in \eqref{reduced-eqn}, which cancel out. This
leaves us with the reduced equation $P(z,f)\equiv h_2$. 
\end{itemize}

\medskip
(2) If the sub-case (i) of Theorem~\ref{theo1.3}(2) occurs, then $l\le -1 \le m$, which implies $\alpha_1'+\alpha_2'=0$, or, in other words $\alpha_1+\alpha_2 \equiv 0$ by $\alpha_1(0)=0=\alpha_2(0)$.  In this case it follows from \eqref{wr} that $W$ is rational. By a simple calculation, or by using \cite[Proposition~1.4.6]{L}, we
infer $r_1=-W'/W$. Thus, from \eqref{r1}, we get
	\[
	-2\frac{W^{\prime }}{W}=(2n-1)\frac{\left( b^{2}-4ac\right) ^{\prime }}{\left( b^{2}-4ac\right) }-2n\frac{\varphi ^{\prime }}{\varphi },
	\]
which leads to
	\[
	\sqrt{b^{2}-4ac}=C\frac{W\left( b^{2}-4ac\right) ^{n}}{\varphi ^{n}},\quad C\in\C\setminus\{0\}.
	\]
Since $W,a,b$ and $\varphi $ are rational, it follows that $\sqrt{b^{2}-4ac}$
is also rational. Thus  \eqref{e22} can be re-written as
	\[
	\frac{\varphi }{n(n-1)}=\left( f^{\prime }-R_{-}f\right) \left( f^{\prime
	}-R_{+}f\right),
	\]
where $R_{\pm }=\displaystyle\frac{-b\pm \sqrt{b^{2}-4ac}}{2c}$ are rational functions. Therefore, 
	\begin{equation}\label{R-plusminus}
	\begin{split}
	f^{\prime }-R_{-}f &=\Pi _{1}e^{\beta (z)}, \\
	f^{\prime }-R_{+}f &=\Pi _{2}e^{-\beta (z)},
	\end{split}	
	\end{equation}
where $\Pi _{1},\Pi _{2}$ are rational functions and  $\beta $ is a polynomial. By subtracting the lower equation from the upper equation in \eqref{R-plusminus}, we get $f=q_{1}e^{\beta (z)}+q_{2}e^{-\beta (z)}$, where $q_1,q_2$ are rational functions. Substituting this form of $f$ into \eqref{e1}, we obtain four dominating terms 
$q_1e^{n\beta}, q_2e^{-n\beta}, h_1, h_2$, which must cancel out in pairs. 
This happens only when $n\beta =\pm\alpha_1$ and $\alpha_1=-\alpha_2$ hold.

(3) If the sub-case (ii) of Theorem~\ref{theo1.3}(2) holds, then $m=2l$ and  $C_{0} = \frac{n(n-1)}{(2n-1)^{2}}C_{1}^{2}$. We have
	\begin{eqnarray*}
	r_{1}(z) &=& -s(a _{1}+a _{2})z^{s-1}+O(z^{s-2}), \\
	r_{0}(z) &=& s^{2} a _{1} a _{2}z^{2s-2}+O(z^{s-2}).
	\end{eqnarray*}
Thus $C_{0} = \frac{n(n-1)}{(2n-1)^{2}}C_{1}^{2}$ is equivalent to
	$$	
	\left( \frac{ a _{1}}{ a _{2}}\right) ^{2}+\left(2- \frac{(2n-1)^2}{n(n-1)}\right) \left( \frac{ a _{1}}{ a _{2}}\right) +1=0,
	$$
which in turn is equivalent to $\frac{ a _{1}}{ a _{2}}\in \left\{ \frac{n}{n-1},\frac{n-1}{n}
\right\}$. Since we assumed that $| a _{1}|\geq | a _{2}|$,  the only possibility is 
$\frac{ a _{1}}{ a _{2}}=\frac{n}{n-1}$. 
Solving \eqref{e27}, we obtain $f=q_{1}e^{\beta }+q_{2}$, where $q_{1},q_{2}$ are rational functions 
and $\beta$ is a polynomial. The term $f_{0}=q_{1}e^{\beta }$ satisfies the homogeneous
equation corresponding to \eqref{e27}, i.e.,
	$$
	(2n-1)\frac{f_{0}^{\prime }}{f_{0}}=\frac{W^{\prime }}{W}-\frac{Q^{\prime }}{Q},
	$$
because $r_1=-W'/W$. Integration on both sides results in
	\[
	f_{0}^{2n-1}=C\frac{W}{Q}=p_{0}e^{\alpha_{1}+\alpha_{2}},\quad C\in\C\setminus\{0\},
	\]
where $p_{0}$ is a rational function. Thus $\beta (z)=\frac{\alpha_{1}(z)+\alpha_{2}(z)}{2n-1}$.

\subsection{\sc Proof of Corollary \ref{CC-corollary}: The linearly dependent case}\label{linearly-dependent}

Suppose that the functions $h_1,h_2$ in \eqref{hee} 
are linearly dependent. Then $h$ is of the form $h(z)=p(z)e^{\gamma(z)}$, where $p$ is
rational and $\gamma$ is a non-constant polynomial. We see that $h$ solves \eqref{e2}, where 
$r_1(z)\equiv r_2(z)\equiv 0$ and
	$$
	-r_0(z)=\frac{p''(z)}{p(z)}+2\gamma'(z)\frac{p'(z)}{p(z)}+\gamma'(z)^2
	$$
is a rational function. 

%From Theorem~\ref{theo1.3} it follows that $f$ has finitely many poles.
%Hence Corollary \ref{CC-corollary} reduces to \cite[Theorem~1.7]{Liao} in this case. 

It is easy to see that the sub-case (ii) in Theorem~\ref{theo1.3}(2) cannot hold. Next we show
that the sub-case (i) in Theorem~\ref{theo1.3}(2) cannot hold either. Since $r_1\equiv 0$, 
equation \eqref{r1} reduces to
	\[
	(2n-1)\frac{\left( b^{2}-4ac\right) ^{\prime }}{\left( b^{2}-4ac\right) }=2n\frac{\varphi ^{\prime }}{\varphi },
	\]
which leads to
	\[
	\sqrt{b^{2}-4ac}=C\left( \frac{b^{2}-4ac}{\varphi}\right)^{n},\quad C\in\C\setminus\{0\}.
	\]
Similarly as in Section~\ref{linearly-independent}(2), we deduce that $f=q_{1}e^{\beta (z)}+q_{2}e^{-\beta (z)}$, where $q_1,q_2$ are rational functions and $\beta$ is a polynomial. Substituting this form of $f$ into \eqref{e1}, we obtain three dominating terms 
$q_1e^{n\beta}, q_2e^{-n\beta}, pe^\gamma$, which cannot cancel out.

It follows that the situation in Theorem~\ref{theo1.3}(1) holds. But now the discussion in
Section~\ref{linearly-independent}(1) applies, and we are done.

\bigskip
\noindent
\textbf{Acknowledgements.}
Latreuch was supported by the University of Mostaganem. Wang was supported by the Natural Science Foundation of China (No.~11771090) and the Natural Science Foundation of Shanghai (No.~17ZR1402900).
They both want to thank the Department of Physics and Mathematics at the UEF for
its hospitality during their visit.

%************************** References ******************************

%-----------------------------------------------------------------------------

\end{document}